\numberwithin{equation}{section}
\def\rev{}
\def\beq{\begin{equation}}
\def\eeq{\end{equation}}
\def\beqs{\begin{equation*}}
\def\eeqs{\end{equation*}}
\newtheorem{theorem}{Theorem}[section]
\newtheorem{lemma}[theorem]{Lemma}
\newtheorem{proposition}[theorem]{Proposition}
\theoremstyle{definition}
\newtheorem{definition}[theorem]{Definition}
\def\myclearpage{}
\definecolor{darkred}{rgb}{.70,.12,.20}
\definecolor{darkgreen}{rgb}{.20,.52,.14}
\def\R{{\bf R}}
\def\N{{\bf N}}
\def\Z{{\bf Z}}
\def\C{{\bf C}}
\def\T{{\bf T}}
\def\E{{\bf E}}
\def\EO{{\cal E}_0}
\def\MO{{\cal M}_0}
\def\EH{{\cal E}_1}
\def\V{{\bf V}}
\def\words#1{\quad\hbox{#1}\quad}
\def\wwords#1{\qquad\hbox{#1}\qquad}
\numberwithin{equation}{section}
\title{
	Data Assimilation using Time-Delay Nudging\\
	in the Presence of Gaussian Noise
}
\author{Emine Celik$^1$ and Eric Olson$^2$}
\date{\today}
\begin{document}
{
    \catcode`\@=11
    \gdef\curl{\mathop{\operator@font curl}\nolimits}
}
\maketitle
\begin{center}
\textit{$^1$Department of Mathematics, 
Sakarya University\\
54050 Sakarya, Turkey}\\
\textit{$^2$Department of Mathematics and Statistics, 
University of Nevada, Reno\\
Reno, NV  89557, USA}\\
\medskip
Email addresses:  \texttt{eminecelik@sakarya.edu.tr, ejolson@unr.edu}
\end{center}

\begin{abstract}
We study a discrete-in-time data-assimilation algorithm
based on nudging through a time-delayed feedback control
in which the observational measurements have been 
contaminated by a Gaussian noise process.
In the context of the two-dimensional 
incompressible Navier--Stokes equations
we prove the expected value of the square-error
between the approximating 
solution and the reference solution over time
is proportional to the variance of the noise up to a
logarithmic correction.
The qualitative behavior and physical relevance of our analysis
is further illustrated by numerical simulation.
\end{abstract}
{\bf Keywords:} Discrete data assimilation, 
two dimensional Navier-Stokes equations.
\\

\noindent{\bf AMS subject classifications:} 35Q30, 37C50, 76B75, 93C20.
%-----------------------------------------------------------------------------------------------------------------

%-----------------------------------------------------------------------------------------------------------------

\myclearpage
\section{Introduction}\label{intro}
The goal of data assimilation 
is to combine incomplete and {\rev possibly} noisy observational measurements
taken over an interval of time with dynamical knowledge about
the system being observed to obtain a more accurate estimate of
the current state.
The time-delay nudging algorithm was introduced 
in the context
of the two-dimensional incompressible Navier--Stokes equations
by Foias, Mondaini and Titi~\cite{Foias2016} as a sequential 
data-assimilation technique to processes a discrete time series of 
incomplete noisy observations.
In this paper we extend the results of that work 
by adding a mechanism that removes outliers from
the data.  This eliminates the 
requirement that the noise be {\rev bounded}
and allows us to treat observations
contaminated by a Gaussian process.
We further remove some additional $L^2$-boundedness
assumptions on the interpolant observables
and follow~\cite{Celik2019} 
by applying a spectral filter to the feedback term.

The resulting analysis 
then provides bounds on 
$\E\big[\|U-u\|_{H^1}^2\big]$ 
where $U$ is the reference solution and $u$ is the approximating
solution obtained through the modified delay-nudging method
just mentioned.
Our main result consists of mathematically rigorous conditions under 
which bounds on the expectation naturally depend on the variance of the 
Gaussian noise process up to a logarithmic correction.

{\rev Before proceeding we remark on the importance of removing outliers
from the observations.  Foremost, this allows our analysis to treat 
Gaussian noise processes.  Intuitively with such a noise process 
there is a small chance that any
observation is contaminated by an error that dwarfs the size of all
possible trajectories lying on the global attractor. 
Therefore, upon assuming the 
unknown reference solution comes from a long term 
evolution prior to the observations, we can identify
conditions for when an observation contains an unreasonably 
large amount of noise.
Removing arbitrarily large outliers then prevents them from damaging the 
approximation obtained by data assimilation and provides rigorous bounds
on the expected quality of that approximation.

From a practical point of view, any instrumentation which provides 
measurements of a physical process has a limited numerical range of 
possible observations.  For example, the output of an anemometer 
simply can not display arbitrarily large velocities nor can it spin 
at arbitrarily large speeds.  An individual device may
include features to eliminate obviously wrong measurements, while 
signal processing techniques coupled with 
common sense are typically used 
to aggregate and reconcile data from many sources.
Thus, even if the underlying noise in a physical observation is 
Gaussian, the resulting datasets do not include 
arbitrarily large errors.  In this way the mathematical 
techniques described in this paper for removing outliers 
can also be seen to mimic realistic data collection.}

As in~\cite{Foias2016}, see also 
\cite{Azouani2014}, \cite{Bessaih2015}, \cite{Blomker2013}, 
\cite{Gesho2016} and \cite{Hayden2011} 
the model problem used for our study is the
two-dimensional incompressible Navier--Stokes equations.
In particular, the reference solution $U$ used for
our data-assimilation problem satisfies
\begin{equation}\label{ns2d}
	{\partial U\over\partial t}
		+(U\cdot\nabla)U -\nu\Delta U + \nabla p = f,
\qquad
	\nabla\cdot U = 0
\end{equation}
on the domain $\T=[0,2\pi]^2$ equipped
with $2\pi$-periodic boundary conditions.
Here $\nu$ is the kinematic viscosity, $f$ a time-independent
body force, $p$ the pressure and $U$ the Eulerian velocity
field of the fluid.

The use of the two-dimensional Navier--Stokes as a computational
tool to study data assimilation can be traced to a 1998 
report of Browning, Henshaw and Kreiss \cite{Browning1998} while
the main ideas behind the rigorous mathematical analysis trace their 
origin to a 1967 paper on determining modes by Foias and 
Prodi \cite{Foias1967}.
We note the rich analytic
theory behind these equations, their computational tractability
and the dynamical properties similar to more complicated physical 
models such as the primitive equations which govern the atmosphere.
For these reasons we hope that our current research will be seen 
as both mathematically interesting and physically relevant.

For our computations we introduce a physically-motivated interpolant 
based on observations of the velocity field 
that consist of local averages taken at a set of points in space.
From a theoretical point of view these local averages provide 
the regularity needed to obtain a type-I interpolant to which
our theory applies.
Aside from these theoretical advantages, we emphasize
from a physical point of view that such local averages approximate 
nodal measurements of the velocity field while at the same 
time more realistically represent the 
characteristic averaging properties 
of physical observational devices.
For completeness, a proof that the interpolant described above 
is type I appears in Appendix \ref{localaverages}.
{\rev Nudging with noise was investigated computationally 
with a focus on parameter recovery for the Lorenz equations
by Carlson, Hudson, Larios, Martinez, Ng and Whitehead \cite{Carlson2022} 
and in the context of the Rayleigh-B\'enard equations by
El Rahman Hammoud, Le Ma\^itre, Titi, Hoteit and 
Knio~\cite{Hammoud2023}.}

The organization of this paper is as follows.
Section \ref{preliminaries} recalls some properties of 
the two-dimensional incompressible
Navier--Stokes equations, some statistical results and 
then fixes our notation.  
We describe the process of removing outliers from 
the observational data and further state our main theoretical 
result as Theorem~\ref{mnudge} which bounds the 
expectation in the presence of Gaussian noise.
Section~\ref{bopath} obtains pathwise estimates similar to
those appearing in \cite{Foias2016} in the form 
needed for our main result which is then proved in
Section~\ref{nudgemod}.
The paper finishes in Section~\ref{compute} with a set of computations to test
the physical relevance of the theory along 
with conclusions and directions for future work.

\section{Preliminaries}\label{preliminaries}

We begin by introducing the functional notation used by
the theoretical study of the Navier--Stokes equations and then
stating some
{\it a priori\/} bounds on long-time solutions.

Let ${\cal V}$ be the set of all $\R^2$-valued divergence-free 
$2\pi$-periodic trigonometric polynomials with zero spatial
averages, $V$ the closure of ${\cal V}$ in $H^1(\T)$
where $\T=[0,2\pi]^2$ is the fundamental domain of the
$2\pi$-periodic torus, $V^*$ be the dual of $V$
and let $P_H$ be the orthogonal projection of $L^2(\T)$
onto $H$ where $H$ is the closure of ${\cal V}$ in $L^2(\T)$.
For simplicity we will write 
$L^2$ and $H^1$ without specifying the domain $\T$ 
when there is no chance of confusion.  

{\rev Due to the periodic boundary conditions, the spaces $V$ and
$H$ can also be characterized in terms of Fourier series.
In particular,
$$
    H=\bigg\{ \sum_{k\in\Z^2\setminus \{0\}} u_k e^{ik\cdot x}
        : 
        \sum_{k\in\Z^2\setminus \{0\}} |u_k|^2<\infty,\quad
        k\cdot u_k=0
        \words{and}
        u_{-k}=\overline{u_k}
    \,\bigg\}
$$
while
$$
    V=\bigg\{ \sum_{k\in\Z^2\setminus \{0\}} u_k e^{ik\cdot x}
        : 
        \sum_{k\in\Z^2\setminus \{0\}} |k|^2|u_k|^2<\infty,\quad
        k\cdot u_k=0
        \words{and}
        u_{-k}=\overline{u_k}
    \,\bigg\}.
$$
Here $u_k\in\C^2$ are the Fourier coefficients for the velocity field $u$.}

Let $A\colon V\to V^*$ and $B\colon V\times V\to V^*$
be the continuous extensions for $u,v\in{\cal V}$
of the operators given by
$$
    Au=-P_H \Delta u\qquad\hbox{and}\qquad
    B(u,v)=P_H (u\cdot\nabla v).
$$
{\rev 
Note $A$ is a positive operator with
smallest eigenvalue $\lambda_1=1$.
This dimensional constant is carried throughout our 
analysis for consistency.
}
We further write the
$L^2$ norm of $u\in H$ as $|u|$, the $H^1$ norm of $u\in V$
as $\|u\|$ and note that $|Au|$ is equivalent to the $H^2$
norm on the domain ${\cal D}(A)$ of $A$ {\rev into $H$}. 

Recall also the orthogonality property 
\begin{equation}\label{oprop}
	\big(B(v,v),Av\big)=0
\end{equation}
which holds for periodic two-dimensional divergence-free vector fields.

As shown in Constantin and Foias~\cite{Constantin1988},
Foias, Manley, Rosa and Temam~\cite{Foias2001},
Robinson~\cite{Robinson2001} or
Temam~\cite{Temam1983},
given $f\in{\rev H}$ and $U_0\in V$,
the two-dimensional incompressible Navier--Stokes equations
have a unique strong solution $U(t)\in V$ for $t\ge 0$ which depends 
continuously on the initial 
condition $U_0$ {\rev with respect to the $V$ norm}.  In particular, given
any $T>0$ we have that
\begin{equation}\label{strong}
	U\in L^\infty\big([0,T); V\big)\cap L^2\big([0,T); {\cal D}(A)\big)
\wwords{and}
	{dU\over dt}\in L^2\big([0,T);H\big).
\end{equation}
We may then express 
\eqref{ns2d} in functional form as
\begin{equation}\label{2dns}
    {dU\over dt}+\nu AU+B(U,U)=f
\end{equation}
with initial condition $U_0\in V$.

Under the conditions mentioned above, it is well known that
\eqref{2dns} possess a unique global attractor ${\cal A}$. 
To set our notation we follow \cite{Robinson2001} and
denote the bounds on ${\cal A}$ by

\begin{theorem}\label{apriori}
Let $\cal A$ be the global attractor of (\ref{ns2d}) the 
two-dimensional incompressible Navier--Stokes equations.
There exists {\em a priori} bounds $\rho_H$, $\rho_V$ and $\rho_A$ 
depending only on $\nu$ and $f$ such that
\begin{equation}
|U|\le\rho_H,\qquad
\|U\|\le \rho_V\wwords{and} |AU|\le \rho_A
\end{equation}
for every $U\in{\cal A}$.
\end{theorem}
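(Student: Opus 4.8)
The plan is to establish the three bounds in succession—first on $H$, then on $V$, then on ${\cal D}(A)$—by energy estimates applied to the functional form \eqref{2dns}, combined with the uniform Gr\"onwall lemma, following the classical arguments of Constantin and Foias~\cite{Constantin1988}, Temam~\cite{Temam1983} and Robinson~\cite{Robinson2001}. Since the attractor $\cal A$ absorbs every bounded set of initial data in $V$, it suffices to show the relevant norms become bounded, uniformly in the initial condition and depending only on $\nu$ and $f$, for $t$ sufficiently large. For $\rho_H$ I would take the $L^2$ inner product of \eqref{2dns} with $U$; the nonlinear term vanishes by the standard identity $(B(U,U),U)=0$, leaving $\frac{1}{2}\frac{d}{dt}|U|^2+\nu\|U\|^2=(f,U)$. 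The Poincar\'e inequality $\|U\|^2\ge\lambda_1|U|^2=|U|^2$ together with Young's inequality on $(f,U)$ gives $\frac{d}{dt}|U|^2+\nu|U|^2\le\nu^{-1}|f|^2$, so Gr\"onwall yields $\limsup_{t\to\infty}|U(t)|^2\le\nu^{-2}|f|^2$, which determines $\rho_H$. Integrating the energy equality over $[t,t+1]$ also produces the auxiliary time-average bound $\int_t^{t+1}\|U(s)\|^2\,ds\le c(\nu,f)$ for $t$ large, which feeds into the next step.

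For $\rho_V$ I would take the inner product of \eqref{2dns} with $AU$, where now the orthogonality property \eqref{oprop} removes the nonlinear term, leaving $\frac{1}{2}\frac{d}{dt}\|U\|^2+\nu|AU|^2=(f,AU)$. Young's inequality bounds $(f,AU)$ by $\frac{\nu}{2}|AU|^2+\frac{1}{2\nu}|f|^2$, so $\frac{d}{dt}\|U\|^2+\nu|AU|^2\le\nu^{-1}|f|^2$; since $|AU|^2\ge\lambda_1\|U\|^2=\|U\|^2$, the uniform Gr\"onwall lemma applied with the time average of $\|U\|^2$ obtained above produces a bound $\|U(t)\|^2\le\rho_V^2$ for $t$ large, hence on $\cal A$. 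Integrating once more yields $\int_t^{t+1}|AU(s)|^2\,ds\le c(\nu,f)$.

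For $\rho_A$ I would differentiate \eqref{2dns} in time—legitimate in view of the regularity \eqref{strong}, modulo a routine justification—and set $v=dU/dt$, obtaining $\frac{dv}{dt}+\nu Av+B(v,U)+B(U,v)=0$ since $f$ is time-independent. Taking the inner product with $v$, using $(B(U,v),v)=0$ and the two-dimensional Ladyzhenskaya inequality $\|w\|_{L^4}^2\le c|w|\,\|w\|$ to estimate $|(B(v,U),v)|\le c|v|\,\|v\|\,\|U\|$, and applying Young's inequality, gives $\frac{d}{dt}|v|^2+\nu\|v\|^2\le c\nu^{-1}\|U\|^2|v|^2$. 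A third application of the uniform Gr\"onwall lemma—now with the time averages of $\|U\|^2$ and of $|v|^2=|dU/dt|^2$, the latter following from the enstrophy estimate and \eqref{2dns}—bounds $|dU/dt(t)|^2$ for $t$ large. Rewriting \eqref{2dns} as $\nu AU=f-dU/dt-B(U,U)$ and estimating $|B(U,U)|\le c|U|^{1/2}\|U\|\,|AU|^{1/2}$ followed by Young's inequality absorbs the $|AU|$ term into the left-hand side and bounds $|AU|$ by the quantities already controlled, giving $\rho_A$.

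I expect the third step to be the main obstacle: it requires differentiating the equation in time and rigorously justifying the extra regularity, controlling the additional bilinear terms $B(v,U)$ and $B(U,v)$, and chaining together three successive uniform-Gr\"onwall arguments, each depending on a time-average bound established at the previous stage. The $H$ and $V$ bounds, by contrast, are comparatively short precisely because in two dimensions the orthogonality identities eliminate the nonlinearity outright.
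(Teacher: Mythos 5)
Your proposal is correct and is essentially the argument the paper relies on: the paper gives no proof of Theorem~\ref{apriori}, simply quoting the bounds from Constantin--Foias, Temam and Robinson, and your chain of estimates (energy estimate for $\rho_H$, enstrophy estimate using the orthogonality \eqref{oprop} for $\rho_V$, then the time-differentiated equation plus uniform Gr\"onwall and the identity $\nu AU=f-dU/dt-B(U,U)$ for $\rho_A$) is exactly the classical route taken in those references.
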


We now characterize the linear operation $J_h$ that will be 
used to interpolate the observational measurements of the solution $U$.

\begin{definition}\label{interpolants}
A linear operator $J_h\colon V\to H$ is said 
to be a {\it type-I interpolant observable\/} if there 
exists $c_1>0$ %$\displaystyle c_1=\frac{18\cdot 3^4h^2}{\pi r^2}>0$ 
such that
\beq\label{typeone}
|\Phi-J_h(\Phi)|^2\le c_1h^2\|\Phi\|^2
\wwords{for all} \Phi\in V.
\eeq
\end{definition} 
\noindent
Sometimes interpolants $I_h\colon V\to L^2(\T)$ 
{\rev which satisfy
\begin{equation}\label{interpi}
|\Phi-I_h(\Phi)|_{L^2(\T)}^2\le c_1h^2\|\Phi\|^2
\end{equation}}%
are considered.  In such cases taking $J_h=P_HI_h$ {\rev implies
$$
	|\Phi-J_h(\Phi)|^2=
	|P_H(\Phi-I_h(\Phi))|_{L^2(\T)}^2
	\le |\Phi-I_h(\Phi)|_{L^2(\T)}^2
$$ and}
results in an
interpolant which satisfies~\eqref{typeone}. Thus, no
generality is lost by
assuming the range of $J_h$ is $H$ in the first place.

Appendix~\ref{localaverages} describes the exact type-I interpolant 
used for our numerics.
{\rev In particular, Theorem \ref{thmintobs} shows the interpolant $I_h$ 
given by \eqref{noisefree} satisfies \eqref{interpi}
for every $\Phi\in V$.}
More information, other examples of type-I interpolants
as well as the definition of a type-II interpolant
may be found in \cite{Azouani2014}, \cite{Bessaih2015} and references therein.
{\rev We remark that type-II interpolants would involve the use of 
stronger Sobolev norms in the analysis and are outside the scope of the
present research.  It is an interesting question, however, whether
similar results as presented here also hold for type-II interpolants.}

To model the effects of measurement errors, we set
$$
	\widetilde J_h U(t_n) = J_h U(t_n) + \eta_n,
$$
where $\eta_n$ is sequence of independent $H$-valued Gaussian random
variables such that
$$
	\E[ \eta_n] = 0\wwords{and} \V[\eta_n]=
		\E\big[ |\eta_n|^2\big] = \sigma^2.
$$

Specifically, we suppose each $\eta_n$ is distributed as $\eta$ where
\begin{equation}\label{etadist}
	\eta=\sum_{i=1}^{2N} \sigma_i Y_i \psi_i,\qquad
	\sum_{i=1}^{2N}\sigma_i^2=\sigma^2,\qquad
	\psi_i\in H\words{with} |\psi_i|=1
\end{equation}
and $Y_i$ are independent standard normal random variables.
Note the finite degrees of freedom reflected by $2N$ is based on
the physical notion that noise in observations of 
the state of $U$ arise from a finite number of independent
noisy $\R^2$-valued measurements of the velocity field.
Thus, $N$ represents the number of measurements taken at each
instance in time and in
the two-dimensional setting considered here is inversely proportional 
to $h^2$.

Note that $|\eta|^2$ is the generalized $\chi^2$ distribution given by
$$
	|\eta|^2=\sum_{i,j}\sigma_i Y_i (\psi_i,\psi_j)\sigma_j Y_j
		=X^{T}\Psi X
\words{with} \Psi_{ij}=(\psi_i,\psi_j)
\words{and} X_i=\sigma_i Y_i.$$
In our analysis we shall make use of the exponential bound 
proved as Lemma~1 by Laurent and Massart in~\cite{Laurent2000} and 
stated here for reference as
\begin{lemma}[Laurent and Massart]\label{concentration}
Let $Y_i$ for $i=1,\ldots,d$ be independent identically-distributed standard
normal random variables and $a_i\ge 0$.  Set
$$
	|a|_\infty = \sup\big\{\, |a_i| : i=1,\ldots,d\,\big\},\quad
	|a|_2 = \Big(\sum_{i=1}^d a_i^2\Big)^{1/2}
\words{and}
	Z=\sum_{i=1}^d a_i (Y_i^2-1).
$$
Then {\rev for $x>0$ holds}
$$
	{\bf P}\big\{Z\ge 2|a|_2\sqrt x
		+ 2 |a|_\infty x\big\}\le e^{-x}.
$$
\end{lemma}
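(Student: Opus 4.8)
The plan is to prove this by the classical Chernoff, or exponential Markov, device applied to a sum of independent shifted chi-squared variables whose moment generating functions are explicit. For any $s>0$ one has ${\bf P}\{Z\ge t\}\le e^{-st}\E\big[e^{sZ}\big]$, so the task reduces to estimating $\E\big[e^{sZ}\big]$ and then choosing $s$ carefully. By independence of the $Y_i$, $\E\big[e^{sZ}\big]=\prod_{i=1}^d\E\big[e^{sa_i(Y_i^2-1)}\big]$, and the elementary Gaussian integral $\E\big[e^{uY_i^2}\big]=(1-2u)^{-1/2}$, valid for $u<1/2$, gives $\E\big[e^{sa_i(Y_i^2-1)}\big]=e^{-sa_i}(1-2sa_i)^{-1/2}$. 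These $d$ factors are simultaneously finite precisely when $0<s<1/(2|a|_\infty)$, which is the admissible range I would work in throughout; if $|a|_\infty=0$ then $Z=0$ and the statement is trivial.

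Next I would bound the cumulant generating function. Taking logarithms,
$$
	\log\E\big[e^{sZ}\big]=\sum_{i=1}^d\Big(-sa_i-\tfrac12\log(1-2sa_i)\Big),
$$
and the key elementary inequality is $-u-\tfrac12\log(1-2u)\le u^2/(1-2u)$ for $0\le u<1/2$, which follows because the difference vanishes at $u=0$ and has nonnegative derivative $2u^2/(1-2u)^2$. Applying this termwise with $u=sa_i$ and bounding each denominator below by $1-2s|a|_\infty$ yields
$$
	\log\E\big[e^{sZ}\big]\le\sum_{i=1}^d\frac{s^2a_i^2}{1-2sa_i}
		\le\frac{s^2|a|_2^2}{1-2s|a|_\infty}
$$
for $0<s<1/(2|a|_\infty)$. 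Combined with the Chernoff bound this gives ${\bf P}\{Z\ge t\}\le\exp\big(s^2|a|_2^2/(1-2s|a|_\infty)-st\big)$ on the same range.

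The remaining step is to choose $s$ so that, with $t=2|a|_2\sqrt x+2|a|_\infty x$, the exponent is at most $-x$. I expect this to be the delicate point: one could optimize $s$ in general and obtain merely a Bernstein-type tail, but recovering the clean constants $2$ in front of both $|a|_2\sqrt x$ and $|a|_\infty x$ requires pinning $s$ down exactly rather than up to an absolute constant. The value that works is $s=\sqrt x\,\big/\big(|a|_2+2|a|_\infty\sqrt x\big)$, which lies in $(0,1/(2|a|_\infty))$ whenever $|a|_2>0$. With this $s$ one computes $1-2s|a|_\infty=|a|_2\big/\big(|a|_2+2|a|_\infty\sqrt x\big)$, hence $s^2|a|_2^2/(1-2s|a|_\infty)=|a|_2x\big/\big(|a|_2+2|a|_\infty\sqrt x\big)$, while $st=\big(2|a|_2x+2|a|_\infty x\sqrt x\big)\big/\big(|a|_2+2|a|_\infty\sqrt x\big)$; subtracting these produces exactly $-x$, and (treating the degenerate case $|a|_2=0$ separately, where $Z=0$) the lemma follows. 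Thus no single estimate is hard here; the main obstacle is the arithmetic bookkeeping that makes the advertised form $2|a|_2\sqrt x+2|a|_\infty x$ come out on the nose.
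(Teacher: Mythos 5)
Your argument is correct: the Chernoff device with $\E[e^{uY_i^2}]=(1-2u)^{-1/2}$, the cumulant inequality $-u-\tfrac12\log(1-2u)\le u^2/(1-2u)$ on $[0,1/2)$, and the exact choice $s=\sqrt x\big/\big(|a|_2+2|a|_\infty\sqrt x\big)$ do make the exponent come out to precisely $-x$, as your computation of $1-2s|a|_\infty$ and $st$ shows. Note, however, that the paper does not prove this statement at all: it quotes it as Lemma~1 of Laurent and Massart \cite{Laurent2000} and uses it as a black box, so your proposal supplies the proof the paper delegates to the literature, and it is essentially the original Laurent--Massart argument rather than a genuinely different route. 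One small quibble: in the fully degenerate case $a_1=\cdots=a_d=0$ one has $Z=0$ and the threshold $2|a|_2\sqrt x+2|a|_\infty x=0$, so ${\bf P}\{Z\ge 0\}=1$ and the inequality as written fails; this corner case is an artifact of the statement itself (present already in the cited lemma) rather than a defect of your argument, but calling it ``trivial'' glosses over it, and the cleanest fix is simply to exclude the identically-zero coefficient vector (as the application in the paper does, since there $\sigma^2>0$).
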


Note by setting $a_i=\sigma_i^2$ and $d=2N$ we obtain
$$
	|a|_\infty \le \sum_{i=1}^{2N} |a_i|\le \sigma^2\wwords{and}
	|a|_2 \le \Big(|a|_\infty \sum_{i=1}^{2N}|a_i|\Big)^{1/2}\le \sigma^2
$$
in which case Lemma~\ref{concentration} implies
\begin{equation}\label{concbound}
	{\bf P}\big\{
		\|X\|^2-\sigma^2 \ge 2\sigma^2\sqrt x + 2\sigma^2 x
		\big\}\le e^{-x}.
\end{equation}

With this framework in place, we describe in details 
the data assimilation method which appears in~\cite{Foias2016} that 
constitutes the beginning of
our analytical and numerical study.

\begin{definition}\label{defnudge}
The {\it delay-nudging method\/} constructs an approximation 
$u$ of the reference solution $U$
by setting $u(t_0)=0$ and then evolving $u$ continuously as
\begin{equation}\label{nudging} 
	{du\over dt}+\nu Au+B(u,u)=f +  \mu
		\big(\widetilde J_h U(t_n)-J_h u(t_n)\big) 
\words{for}
	t\in [t_n,{\rev t_{n+1}}).
\end{equation}
\end{definition}
\noindent
Here $\mu$ is a relaxation parameter which 
affects the strength of the feedback term and may 
be tuned based on the resolution and the noise present in the measurements. 

{\rev Before proceeding it is critical to check that
equations \eqref{nudging} are well posed 
and for any fixed realization of the noise process $\eta_n$ 
uniquely determine an approximating solution $u(t)$.
Since the reference solution $U(t)\in V$ for all $t\ge 0$,
then $J_h U(t_n)\in H$ for $n=0,1,\ldots$
by the definition of a type-I interpolant.
Note first that the dynamics governing $u(t)$
on the interval $[t_0,t_1)$
are identical to the two-dimensional incompressible 
Navier--Stokes equations~\eqref{2dns} with initial
condition $0\in V$ at $t=t_0$ and time-independent body forcing
$$
	f +  \mu \widetilde J_h U(t_0)=
	f +  \mu J_h U(t_0) + \eta_0\in H.
$$
Here we have used that $\eta_n\in H$.
The theory of the two-dimensional incompressible
Navier--Stokes equations now implies there exists a
unique strong solution $u\in C([t_0,t_1];V)$.
To evolve $u$ further in time
consider \eqref{nudging} with $n=1$ and initial condition 
$u(t_1)\in V$ at $t=t_1$.  The argument now follows by induction.

Suppose $u\in C([t_0,t_n];V)$.  Then $u(t_n)\in V$ implies
$J_h u(t_n)\in H$.  It follows that
$$
	f +  \mu \big(\widetilde J_h U(t_n)-J_h u(t_n)\big)=
	f +  \mu J_h U(t_n) -\mu J_h u(t_n)+ \eta_n\in H.
$$
Consequently there exists a unique strong solution 
$u\in C([t_n,t_{n+1}];V)$.  This combined with the induction
hypothesis yields that $u\in C([t_0,t_{n+1}];V)$.}

The delay-nudging method given by Definition \ref{defnudge}
was originally described and analyzed by Foias, 
Mondaini and Titi \cite{Foias2016}.  That work includes a
pathwise treatment of noisy observations for the case where
the noise process is {\rev bounded}.  Namely, one has

\begin{theorem}[Foias, Mondaini and Titi]\label{mondaini}
Let $u$ be the approximating solution obtained by 
delay-nudging on $[t_0,\infty)$ satisfying $u(t_0)=0.$
Assume $\eta_n$ is a noise process such that
\begin{equation}\label{support}
	\|\eta_n\|_{H^1} \le \EH\wwords{for all} n\in \N
\end{equation}
and that the type-I interpolant observable 
$I_h\colon L^2(\T)\to L^2(\T)$
further satisfies
\begin{equation}\label{l2hyp}
	\|I_h \Phi\|_{L^2} \le c_2 \|\Phi\|_{L^2}
\wwords{for all} \Phi\in L^2(\T).
\end{equation}
Here $c_2$ is a positive constant.
Suppose $\mu$, $h$ and $\delta$ satisfy
$$
	\mu\ge c{(\rho_V+\EH)^2\over\nu}
		\bigg( 1+\log\Big({\rho_V+\EH\over\nu\lambda_1^{1/2}}\Big)\bigg),
\qquad
	h\le {1\over 2c_0} \Big({\nu\over\mu}\Big)^{1/2}
$$
and 
$$
	\delta\le {c\over\mu}\min\Big\{
	1, {\nu^{3/2}\mu^{1/2}\over \rho_H\rho_V},
	{\nu^2\lambda_1^{1/2}\over \rho_H\rho_V},
	{\nu^2\lambda_1\over (\rho_V+\EH)^2},
	{(\nu\lambda_1)^{1/2}\over\mu^{1/2}},
	{(\nu\lambda_1)^2\over\mu^2}\Big\}.
$$
Then
$$
	\limsup_{t\to\infty} \|U-u\|_{H^1} \le c\EH.
$$
\end{theorem}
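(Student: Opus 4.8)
\emph{Proof strategy.}
I would work directly with the error $w=U-u$, which on each window $[t_n,t_{n+1})$ solves
\[
	\frac{dw}{dt}+\nu Aw+B(U,U)-B(u,u)=-\mu\big(J_hw(t_n)+\eta_n\big),
\]
obtained by subtracting \eqref{nudging} from \eqref{2dns} and using the identity $\widetilde J_hU(t_n)-J_hu(t_n)=J_hw(t_n)+\eta_n$. Taking the $H$-inner product with $Aw$, writing $B(U,U)-B(u,u)=B(w,w)+B(w,u)+B(u,w)$, and using the orthogonality \eqref{oprop} to annihilate $(B(w,w),Aw)$ gives
\[
	\tfrac12\frac{d}{dt}\|w\|^2+\nu|Aw|^2
	=-\big(B(w,u)+B(u,w),Aw\big)-\mu\big(J_hw(t_n),Aw\big)-\mu(\eta_n,Aw).
\]
The argument then balances a damping term hidden in the feedback against the remaining terms, which must be either absorbed or shown to be of order $\EH^2$.

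The first move is to write $J_hw(t_n)=w(t)-(I-J_h)w(t)-\big(J_hw(t)-J_hw(t_n)\big)$, so that $-\mu(w(t),Aw)=-\mu\|w\|^2$ supplies the damping. The interpolation error $-\mu\big((I-J_h)w,Aw\big)$ is handled by \eqref{typeone}, giving a bound $\mu c_1^{1/2}h\,\|w\|\,|Aw|$, which the hypothesis $h\le(2c_0)^{-1}(\nu/\mu)^{1/2}$ converts into a small fraction of $\nu|Aw|^2+\mu\|w\|^2$ once $c_0$ is fixed large relative to $c_1$. The genuinely delayed term $-\mu\big(J_hw(t)-J_hw(t_n),Aw\big)$ is treated by invoking the $L^2$-boundedness \eqref{l2hyp} to pass to $c_2\,|w(t)-w(t_n)|\,|Aw|$, then writing $w(t)-w(t_n)=\int_{t_n}^t\frac{dw}{ds}\,ds$ and estimating $\frac{dw}{ds}$ in $H$ term by term from the error equation; each of the viscous, bilinear, feedback and noise contributions forces one of the six quantities in the minimum defining the admissible $\delta$, the factors of $|Aw|$ inside the time integral being controlled by the coercive term $\nu\int|Aw|^2$ already present after one integrates the energy balance over the window. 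For $\delta$ within the stated bound this correction is again absorbed, up to a remainder of order $\mu\EH^2$ coming from the noise part of $\frac{dw}{ds}$.

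The nonlinear terms $(B(w,u)+B(u,w),Aw)$ are estimated by two-dimensional interpolation inequalities of Ladyzhenskaya and Agmon type together with the bound $\|u\|\le\rho_V+\|w\|$ from Theorem~\ref{apriori}; controlling the nonlinearity in this $H^1$ energy estimate against the feedback damping is what produces the logarithmic (Brezis--Gallouet) correction, and the stated lower bound $\mu\ge c\,\nu^{-1}(\rho_V+\EH)^2\big(1+\log((\rho_V+\EH)/(\nu\lambda_1^{1/2}))\big)$ is exactly what forces these terms to be dominated by $\tfrac12\mu\|w\|^2$ as long as $\|w\|$ remains within a fixed multiple of $\rho_V+\EH$. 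The noise term is bounded using $\eta_n\in V$ with $\|\eta_n\|\le\EH$ from \eqref{support}: since $(\eta_n,Aw)\le\|\eta_n\|\,\|w\|$, one gets $\mu|(\eta_n,Aw)|\le\tfrac18\mu\|w\|^2+c\mu\EH^2$.

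I would close the argument with a continuation. Since $u(t_0)=0$ and $U(t_0)$ lies in the absorbing ball, $\|w(t_0)\|\le\rho_V$; let $R$ be a suitable constant multiple of $\rho_V+\EH$ and set $T^\ast=\sup\{t\ge t_0:\|w(s)\|\le R\text{ for }s\in[t_0,t]\}$. On $[t_0,T^\ast)$ all of the absorptions above are legitimate, and collecting them yields a differential inequality of the form $\frac{d}{dt}\|w\|^2+\tfrac12\mu\|w\|^2\le c\,\mu\,\EH^2$; Gr\"onwall then gives $\|w(t)\|^2\le e^{-\mu(t-t_0)/2}\|w(t_0)\|^2+2c\EH^2$, so choosing the constant in $R$ with $2c\EH^2<R^2$ keeps $\|w\|$ strictly inside the ball, whence $T^\ast=\infty$, and letting $t\to\infty$ gives $\limsup_{t\to\infty}\|U-u\|_{H^1}\le c\EH$. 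I expect the main obstacle to be the bookkeeping in the delayed-feedback correction: one must estimate $|dw/ds|$ in $H$ --- which itself involves the feedback $\mu|J_hw(t_n)|$, essentially the quantity being bounded --- and verify that every resulting smallness requirement is genuinely implied by the minimum over the six quantities in the hypothesis on $\delta$, ordering the absorptions so that no circularity arises and using the smallness of $h$ and $\delta$ before rather than after the Gr\"onwall step.
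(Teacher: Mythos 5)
Your strategy is sound and is essentially the blueprint the paper itself follows: Theorem~\ref{mondaini} is quoted from \cite{Foias2016} without proof, and the paper's own pathwise argument (Lemma~\ref{dodelay} together with Proposition~\ref{improvedpath}) adapts exactly this scheme --- an $H^1$ energy estimate against $Aw$, the delayed feedback split into a damping term $-\mu\|w\|^2$ plus an interpolation error controlled by \eqref{typeone} and the smallness of $h$, the time increment $w(t)-w(t_n)$ estimated through $\int|dw/ds|^2\,ds$ with the resulting $|Aw|^2$ integral absorbed by the coercive term for $\delta$ small, and then a Gr\"onwall-plus-continuation step. The one structural difference is where you place the interpolant in the splitting: you write $J_hw(t_n)=w(t)-(I-J_h)w(t)-J_h\big(w(t)-w(t_n)\big)$ and invoke \eqref{l2hyp} to handle the increment, which is legitimate under the hypotheses of Theorem~\ref{mondaini}, whereas the paper splits at $t_n$, namely $-J_hw(t_n)=\big(w(t_n)-J_hw(t_n)\big)+\big(w-w(t_n)\big)-w$, precisely so that no $J_h$ acts on the increment and \eqref{l2hyp} can be dropped; similarly you use the $H^1$ noise bound \eqref{support} via $(\eta_n,Aw)\le\|\eta_n\|\,\|w\|$, while the paper's version gets by with only $|\eta_n|\,|Aw|$. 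One detail you should not gloss over: estimating $(B(w,u)+B(u,w),Aw)$ by \eqref{bbone}--\eqref{bbtwo} or Agmon-type inequalities drags in $|Au|$, which is not a priori bounded for the approximating solution; you must either re-expand in terms of $U$ (as in \eqref{dnmethod}, using $(B(w,w),Aw)=0$ and Proposition~\ref{etiti} with $|AU|\le\rho_A$) or bound $|Au|\le\rho_A+|Aw|$ and absorb the $|Aw|$ contributions into $\nu|Aw|^2$ via the usual logarithmic trick --- this is where the logarithm in the lower bound on $\mu$ actually enters, so it is a fixable but nontrivial piece of bookkeeping that your sketch leaves implicit.
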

This paper extends the above result to the 
case when $\eta_n$ is an $H$-valued Gaussian noise processes that 
{\rev is not bounded by} $\EH$. 
We further remove the {\rev continuity} condition~\eqref{l2hyp} and
the requirement that $\eta_n\in V$. 

To these ends we consider a
modified nudging method that removes outliers along with a 
bootstrapping argument to obtain a 
theorem that applies when the noise process is Gaussian.
In particular, by defining the modified 
interpolant observable 
\begin{plain}\begin{equation}\label{I0def}
    \widetilde J_h^o U(t_n) = \cases{
        \widetilde J_h U(t_n)&
            for $|\widetilde J_h U(t_n)| \le 2M$\cr
        0 & otherwise
    }
\end{equation}\end{plain}%
we obtain a new noise process
$\eta^o_n=\widetilde J_h^o U(t_n)-J_h U(t_n){\rev{}\in H}$ 
which {\rev is bounded}
and an interpolant that filters 
outliers that correspond to points outside the known 
absorbing ball of the global attractor
when $M$ is large enough.
Replacing $\widetilde J_h$ by $\widetilde J_h^o$ in
\eqref{nudging}
subsequently leads to the {\it modified nudging method\/} and 
our main theoretical result.

\begin{theorem}\label{mnudge}
Let $u$ be an approximating solution obtained by the modified
nudging method with suitable relaxation parameter $\mu$.
If $\delta>0$ and $h>0$ are small enough, then 
there exists a constant $C_0$ independent of $\sigma$ 
and a logarithmic correction $f(\sigma)$ such that
$$
	\limsup_{t\to\infty} \E\big[\|U-u\|^2\big]
		\le C_0\sigma^2 f(\sigma)
$$
holds for all $\sigma$ sufficiently small.
\end{theorem}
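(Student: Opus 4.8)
The idea is to reduce the stochastic statement to a deterministic per-step recursion of the kind furnished by the pathwise analysis of Section~\ref{bopath}, take expectations in it, and use the concentration bound~\eqref{concbound} to handle the rare events on which the Gaussian noise is atypically large. Write $w=U-u$. First fix the outlier threshold: once $U$ has entered its absorbing ball, Theorem~\ref{apriori} and~\eqref{typeone} bound $|J_h U(t_n)|\le|U(t_n)|+|U(t_n)-J_h U(t_n)|\le\rho_H+c_1^{1/2}h\rho_V$, so any $M\ge\rho_H+c_1^{1/2}h\rho_V$ has the property that~\eqref{I0def} never discards a genuine observation: $|\widetilde J_h U(t_n)|>2M$ forces $|\eta_n|>2M-|J_h U(t_n)|>M$. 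Hence $\eta^o_n=\eta_n$ whenever $|\eta_n|\le M$, while if an outlier is removed then $\eta^o_n=-J_h U(t_n)$ satisfies $|\eta^o_n|\le M$. In every case $\eta^o_n\in H$ is \emph{deterministically} bounded, $|\eta^o_n|\le 3M$, and $|\eta^o_n|\le|\eta_n|$: the boundedness is what lets the pathwise machinery apply, while the domination by $|\eta_n|$ is what will ultimately produce the variance.

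Next, because $\eta^o_n$ is $H$-valued and bounded by $3M$, the spectrally filtered delay-nudging scheme is covered by the estimates of Section~\ref{bopath} --- the analogue of Theorem~\ref{mondaini} for bounded $V$-valued noise, but with the $L^2$-continuity hypothesis~\eqref{l2hyp} and the $V$-regularity of the noise dispensed with by means of the spectral filter. Choosing $\mu$ large and $h,\delta$ small, with all thresholds depending only on $\nu$, $f$ and the fixed quantity $M$, a bootstrap in $n$ (handling the nonlinear term with the orthogonality~\eqref{oprop}) produces a uniform bound $\sup_{t\ge t_0}\|w(t)\|\le R$ together with a contractive per-step inequality
\beq\label{prop-rec}
	\|w(t_{n+1})\|^2\le\theta\,\|w(t_n)\|^2+K\,|\eta^o_n|^2,\qquad 0<\theta<1,
\eeq
holding for \emph{every} realization of the noise, together with an interval estimate $\sup_{t\in[t_n,t_{n+1}]}\|w(t)\|^2\le C(\|w(t_n)\|^2+|\eta^o_n|^2)$. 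The decisive point is that $|\eta^o_n|\le 3M$ is a \emph{fixed} quantity, so $\mu$ and $R$ may be calibrated to this scale rather than to $\sigma$, which keeps the right side of~\eqref{prop-rec} inside the regime where the nonlinearity is controlled even on the rare steps where an unremoved noise reaches size $\sim M\gg\sigma$. Some care is needed because the energy argument underlying~\eqref{prop-rec} naturally places the noise inside a Gronwall exponent, so that $\theta$ and $K$ are really expectations of $\exp(c\mu^2\delta|\eta^o_n|^2)$-type quantities; the moment bound supplied by~\eqref{concbound} controls these only when $\mu^2\delta\sigma^2$ is small.

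Now take expectations. After discarding a finite transient $U(t_n)$ is deterministic and $\eta_n$ is independent of $w(t_n)$, so iterating~\eqref{prop-rec}, using $\E[|\eta_n|^2\,\indic_{\{|\eta_n|\le 3M\}}]\le\E[|\eta_n|^2]=\sigma^2$ and $9M^2\,{\bf P}\{|\eta_n|>3M\}\le 9M^2 e^{-cM^2/\sigma^2}$ --- the latter being a consequence of~\eqref{concbound} which shows the cost of outlier removal is negligible against $\sigma^2$ --- gives $\limsup_n\E[\|w(t_n)\|^2]\le \Lambda\,\sigma^2$ for an accumulated constant $\Lambda$. Tracking the $1+\log(\cdot)$ factor already visible in the admissible range of $\mu$ in Theorem~\ref{mondaini}, together with the constraint $\mu^2\delta\sigma^2\lesssim 1$ needed to keep the Gronwall moments finite, one finds that $\Lambda$ is of the form $C_0 f(\sigma)$ with $f(\sigma)=1+\log(1/\sigma)$ and $C_0$ independent of $\sigma$. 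The interval estimate then upgrades the bound from the mesh points $t_n$ to all $t\to\infty$, which is the assertion.

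The main obstacle is the pathwise step. As stated in Theorem~\ref{mondaini}, the analysis of~\cite{Foias2016} controls $\limsup\|w\|$ only in terms of a \emph{uniform} bound on the noise --- a quantity that is infinite for an unbounded Gaussian process and, after outlier removal, is still $O(M)=O(1)$ and hence useless for seeing the variance. One must instead sharpen it into the genuinely per-step, contractive form~\eqref{prop-rec}, remove the hypothesis~\eqref{l2hyp} and the $V$-regularity of the noise by inserting the spectral filter, and verify that the bootstrap closes uniformly over \emph{all} noise realizations, including those on which $\|w\|$ is momentarily driven up to the scale $M$. Reconciling this fixed-scale pathwise control with the $\sigma$-scale expectation bound --- together with the logarithmic factors intrinsic to the relaxation-parameter conditions --- is what accounts for the correction $f(\sigma)$.
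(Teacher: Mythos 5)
There is a genuine gap at the crux of your argument. Everything hinges on the asserted pathwise, per-step recursion $\|w(t_{n+1})\|^2\le\theta\|w(t_n)\|^2+K|\eta^o_n|^2$ ``holding for every realization,'' but you never derive it, and Section~\ref{bopath} does not supply it in that form: Proposition~\ref{improvedpath} gives a contraction only over a window on which the noise is \emph{uniformly} bounded by a single number $\alpha$, with constants calibrated through a bootstrap to the fixed scale $\EO$, and the inhomogeneous term is $c\alpha^2$, not the per-step $|\eta^o_n|^2$. Re-deriving the estimate with $|\eta_n|$ replaced step by step would require redoing Lemma~\ref{dodelay}, the energy estimate \eqref{wrecur} and the $S$-bootstrap, which you do not do; you yourself call this ``the main obstacle'' and then describe its resolution in a way that is internally inconsistent: you claim $\theta$ and $K$ are ``really expectations of $\exp(c\mu^2\delta|\eta^o_n|^2)$-type quantities'' requiring $\mu^2\delta\sigma^2\lesssim 1$ and the moment bound \eqref{concbound}. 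That is not how the noise enters this framework (on each interval $[t_n,t_{n+1})$ the noise is an additive, constant-in-time forcing, estimated as $\mu|(\eta_n,Aw)|\le \tfrac{2\mu^2}{\nu}|\eta_n|^2+\tfrac{\nu}{8}|Aw|^2$, never inside a Gronwall exponent), and if $\theta,K$ really were such expectation-type quantities your recursion could not hold pathwise with deterministic constants, which is exactly what the subsequent step $\E[\|w(t_{n+1})\|^2]\le\theta\,\E[\|w(t_n)\|^2]+K\sigma^2$ needs. Note also that if your deterministic per-step recursion were actually proved, iterating it would give $\limsup\E[\|w\|^2]\le C\sigma^2$ with \emph{no} logarithm, so your closing paragraph, which attributes $f(\sigma)=1+\log(1/\sigma)$ to the admissible range of $\mu$ and to the unneeded constraint $\mu^2\delta\sigma^2\lesssim1$, is unsubstantiated in either direction: the correction is never derived.

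For comparison, the paper closes this gap by a different mechanism which is precisely where the logarithm comes from. First, $|\eta^o_n|\le\EO=3M$ always (Lemma~\ref{noisebnd}), so Proposition~\ref{improvedpath} with $\alpha=\EO$ gives the crude uniform bound $\|w(t)\|^2\le cK$. Then, for a tunable $\alpha\le M$, a run of $n_\alpha\approx\log(K/\alpha^2)/\log(1/\theta)$ \emph{consecutive} observations with $|\eta_n|\le\alpha$ guarantees both that no outlier clipping occurs ($\eta^o_n=\eta_n$) and, by \eqref{mygexp} started from the crude bound, that $\|w(t)\|^2\le 2c\alpha^2$ at the end of the run. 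The probability of such a run is bounded below using independence of the $\eta_n$, the Laurent--Massart bound \eqref{concbound} and Bernoulli's inequality; splitting $\E[\|w(t)\|^2]$ between this event and the crude bound $cK$, and choosing $\alpha$ through \eqref{sigmaval} so that $\alpha^2\sim\gamma^{-1}\sigma^2\log(1/\sigma^2)$, yields $C_0\sigma^2 f(\sigma)$ --- the logarithmic factor being exactly the length $n_\alpha$ of the required run of good observations, not anything to do with exponential moments or the size of $\mu$. To repair your proposal you must either genuinely prove the per-step recursion (redoing the bootstrap of Proposition~\ref{improvedpath} with $|\eta^o_n|$ in place of $\alpha$), or adopt the run-of-good-observations argument above.
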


We remark that Theorem \ref{mnudge} forgoes pathwise bounds in order to
handle the case where the measurement errors are 
distributed according to a Gaussian distribution.
Since in the Gaussian case there is no finite $\EO$ such that 
$|\eta_n|\le \EO$ holds for all $n$
no matter how small the variance $\sigma^2$, with
non-zero probability
there will be arbitrarily long sequences
of consecutive observations such that $|\eta_n|$ is large.
It follows there exists $C>0$ such that
the approximating solutions obtained by
the original nudging method satisfy
$$
	\limsup_{t\to\infty}|U-u|\ge C
$$
almost surely no matter how small $\sigma$.
For this reason Theorem~\ref{mnudge} does
not provide pathwise bounds, but instead bounds the expectation in a way
that shows the corresponding approximating solutions asymptotically recover
the exact solution when $\sigma$ goes to zero. 

We now recall some inequalities that will be used in the
subsequent analysis.
Writing the smallest eigenvalue of the Stokes operator $A$ as
$\lambda_1=(2\pi/L)^2$ we have the
Poincar\'e inequalities
\begin{equation}\label{poincarreV}
    \lambda_1 |U|^2\le \|U\|^2\wwords{for} U\in V
\end{equation}
and
\begin{equation}\label{poincarreDA}
    \lambda_1^2 |U|^2\le \lambda_1 \|U\|^2\le |AU|^2 
\wwords{for}
	U\in\mathcal{D}(A).
\end{equation}

Recall also the combination of Agmon's 
inequality \cite{Agmon2010}
with \eqref{poincarreDA} given by
\begin{equation}\label{agmon}
\|U\|_{L^\infty}\le C|U|^{1/2}|AU|^{1/2}
	\le  C\lambda_1^{-1/2}|AU|
\end{equation}
and Ladyzhenskaya's inequality
\begin{equation}\label{lady}
	\|U\|_{L^4} \le C |U|^{1/2} \|U\|^{1/2}.
\end{equation}
In both \eqref{agmon} and \eqref{lady} the constant $C$ is dimensionless and
does not depend on $U$.

Applying H\"older followed by the Ladyzhenskaya and
Agmon inequalities to the nonlinear terms in the 
two-dimensional Navier--Stokes equations yields
\begin{equation}\label{bbone}
|B(u,v)|\le \|u\|_{L^4} \|A^{1/2}v\|_{L^4}
       \le C |u|^{1/2} \|u\|^{1/2} \|v\|^{1/2} |Av|^{1/2}
\end{equation}
and
\begin{equation}\label{bbtwo}
|B(u,v)|\le \|u\|_{L^\infty} \|A^{1/2}v\|_{L^2}
   \le C |u|^{1/2} |Au|^{1/2} \|v\|.
\end{equation}

Finally, we shall make use of the logarithmic bound on the non-linear 
term proved in 
Titi~\cite{Titi1987}, further employed in \cite{Foias2016} and stated
here as
\begin{proposition}\label{etiti} If $U$ and $w$ are in ${\cal D}(A)$ then 
$$
	\big|\big(B(w,U)+B(U,w),Aw\big)\big| 
	\le C \|w\| \|U\|\Big( 
	1+\log{|AU|\over \lambda_1^{1/2}\|U\|}\Big)^{1/2} |Aw|,
$$
where $C$ is a non-dimensional constant depending only on the domain.
\end{proposition}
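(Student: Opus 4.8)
The plan is to estimate the two pieces $(B(U,w),Aw)$ and $(B(w,U),Aw)$ separately and to extract the logarithmic factor from the two-dimensional Brezis--Gallouet logarithmic Sobolev inequality
$$
\|\Phi\|_{L^\infty}\le C\|\Phi\|\Big(1+\log\frac{|A\Phi|}{\lambda_1^{1/2}\|\Phi\|}\Big)^{1/2},
$$
which is the logarithmic refinement of Agmon's inequality \eqref{agmon} and is the sole source of the factor $(1+\log(|AU|/(\lambda_1^{1/2}\|U\|)))^{1/2}$ in the statement. For the first piece the velocity $U$ occupies the transport slot, so H\"older gives $|B(U,w)|\le\|U\|_{L^\infty}\|w\|$ and hence $|(B(U,w),Aw)|\le\|U\|_{L^\infty}\|w\|\,|Aw|$; applying the displayed inequality to $U$ produces exactly the claimed bound for this term, with no logarithm on $w$.

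The second piece $(B(w,U),Aw)$ is the main obstacle, because the derivative now falls on $U$. A direct estimate $|B(w,U)|\le\|w\|_{L^\infty}\|U\|$ followed by the logarithmic Sobolev inequality would place the logarithm on $w$ rather than on $U$, while the crude route $|B(w,U)|\le\|w\|_{L^4}\|\nabla U\|_{L^4}$ through Ladyzhenskaya's inequality \eqref{lady} yields an algebraic factor $|AU|^{1/2}$ instead of a logarithm. To recover the sharp dependence on $U$ I would exploit frequency orthogonality. Writing $P_m$ for the orthogonal projection onto the Fourier modes with $|k|^2\le\lambda_m$ and $Q_m=I-P_m$, I split $U=P_mU+Q_mU$. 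The high-mode contribution is controlled using $\|Q_mU\|\le\lambda_m^{-1/2}|AU|$ together with \eqref{lady}, producing a factor $\lambda_m^{-1/4}|AU|$ that is small for large $m$. The low-mode contribution is estimated by Cauchy--Schwarz over the retained frequency shells, where the harmonic sum $\sum_{1\le|k|^2\le\lambda_m}|k|^{-2}\sim\log\lambda_m$ is the origin of the logarithm. Choosing the cutoff $\lambda_m$ comparable to $(|AU|/(\lambda_1^{1/2}\|U\|))^2$ then balances the two contributions and turns the algebraic $|AU|$-dependence into $(1+\log(|AU|/(\lambda_1^{1/2}\|U\|)))^{1/2}$.

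The delicate point, and where I expect the real work to lie, is arranging matters so that the logarithm genuinely lands on the enstrophy ratio $|AU|/(\lambda_1^{1/2}\|U\|)$ while the dangerous logarithm in $|Aw|/\|w\|$ disappears. This is where the symmetric structure of $B(w,U)+B(U,w)$ is essential: writing $(\,\cdot\,,Aw)=-(\,\cdot\,,\Delta w)$, integrating by parts, and using the divergence-free condition, the two pieces recombine into terms of the form $\int(\partial w)(\partial U)(\partial w)$, which are bounded cleanly by $C\|U\|\|w\|\,|Aw|$ with no logarithm at all via \eqref{lady}, plus a single remainder in which only one derivative acts on $U$. The orthogonality property \eqref{oprop}, namely $(B(w,w),Aw)=0$, may be added freely to remove the self-interaction and isolate this remainder. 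It is precisely this remainder that carries the logarithm, and the frequency-orthogonality estimate above is designed to bound it by $C\|w\|\|U\|(1+\log(|AU|/(\lambda_1^{1/2}\|U\|)))^{1/2}|Aw|$. Combining this with the clean bound for the first piece and for the recombined terms then yields the proposition, the bulk of the effort being the careful dyadic bookkeeping that produces a logarithm rather than an algebraic power of $|AU|$.
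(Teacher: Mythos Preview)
The paper does not supply a proof of this proposition at all: it is stated as a quotation of the result proved in Titi~\cite{Titi1987} and already used in~\cite{Foias2016}. So there is no in-paper argument to compare against; your task was really to reconstruct a proof from the literature.

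Your sketch is along the right lines and is essentially the Titi argument. The first piece $(B(U,w),Aw)$ is indeed handled exactly by $|B(U,w)|\le\|U\|_{L^\infty}\|w\|$ together with the Brezis--Gallouet inequality, and this is the standard step. For the second piece your idea of writing $(B(w,U),Aw)=-\int(w\cdot\nabla U)\cdot\Delta w$, integrating by parts to push one derivative off $\Delta w$, and thereby producing terms of type $\int(\partial w)(\partial U)(\partial w)$ is correct; those terms are bounded by $\|\nabla w\|_{L^4}^2\|\nabla U\|_{L^2}\le C\|w\|\,|Aw|\,\|U\|$ via \eqref{lady}, with no logarithm. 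The remaining cross term after integration by parts can again be written with $w$ undifferentiated in the convective slot, and the Brezis--Gallouet inequality applied to $U$ closes the estimate. This is exactly how the result is obtained in~\cite{Titi1987}. Your alternative frequency-cutoff argument (split $U=P_mU+Q_mU$, optimise over $\lambda_m$) is also viable and is in fact how Brezis--Gallouet itself is proved, so the two descriptions you give are really the same mechanism seen at different levels of abstraction; you do not need both. The passage invoking the orthogonality \eqref{oprop} and the symmetric structure of $B(w,U)+B(U,w)$ is not actually needed: each piece can be bounded separately once the integration by parts has been performed, and adding $(B(w,w),Aw)=0$ does not simplify anything here.
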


\section{Bounds on the Paths}\label{bopath}

In this section we adapt the proof of Theorem~\ref{mondaini} that
appears in \cite{Foias2016} to obtain a similar pathwise bound
under the weaker
hypothesis that the $L^2$ norm of $\eta_n$ is bounded and without 
the requirement
that $J_h$ also satisfy \eqref{l2hyp}.  Unlike the original proof
we limit our attention to {\rev periodic} domains $\T$ which later 
form the context of our computational setting.
Since the plan is to use this result to obtain bounds on
the expectation in the next section, we explicitly track
the rate of approximate synchronization over time.

Write $w=U-u$ 
where $U$ is a free running solution to the two-dimensional
Navier--Stokes equations given by \eqref{2dns} and $u$ is an 
approximating solution obtained from the delay nudging 
method \eqref{nudging}.
{\rev It has already been shown that $U$ and $u$ are strong solutions of
the two-dimensional incompressible Navier--Stokes equations
on each interval $[t_n,t_{n+1})$,
though each with a different initial condition and body force.
Consequently $w$ enjoys the same regularity properties.}
It follows that 
\begin{equation}\label{dnmethod}
	{dw\over dt} +\nu Aw+B(w,U)+B(U,w){\rev{}+{}}B(w,w)=
	-\mu J_h w(t_n) -\mu \eta_n
\end{equation}
for $t\in [t_n,{\rev t_{n+1}})$.
To treat the time delay resulting from the first term on the
right in the above equation we first prove
\begin{lemma}\label{dodelay}
Assuming $|\eta_n|\le \alpha$ then
for $t\in [t_n,t_{n+1})$ holds
\begin{align*}
	|w(t)-w(t_n)|^2 & \le 
		4(t-t_n)^2\mu^2
	\big\{(c_1h^2+\lambda_1^{-1}) 	
			\|w(t_n)\|^2 + \alpha^2\big\} \\
		& \qquad+
		4(t-t_n)\int_{t_n}^{t}
		\big\{\nu + C\lambda_1^{-1/2}\big(2\rho_V+\|w{\rev (s)}\|\big)\big\}^2
		|Aw{\rev (s)}|^2{\rev ds}.
\end{align*}
\end{lemma}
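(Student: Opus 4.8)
The plan is to integrate the evolution equation \eqref{dnmethod} over $[t_n,t]$ and estimate the resulting pieces. Because $U$ and $u$ are strong solutions on $[t_n,t_{n+1})$, so is $w$, and the fundamental theorem of calculus applies; since the delay term $-\mu J_h w(t_n)$ and the noise $-\mu\eta_n$ are constant in $s$, they integrate to $-\mu(t-t_n)(J_h w(t_n)+\eta_n)$, giving
$$
w(t)-w(t_n)=-\mu(t-t_n)\bigl(J_h w(t_n)+\eta_n\bigr)
  -\int_{t_n}^{t}\Bigl(\nu Aw(s)+B(w,U)+B(U,w)+B(w,w)\Bigr)\,ds .
$$
Applying $|a+b|^2\le 2|a|^2+2|b|^2$ splits $|w(t)-w(t_n)|^2$ into a \emph{frozen} contribution from the first term and a \emph{dynamic} contribution from the time integral; collecting the numerical constants then gives the factors displayed in the statement.

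For the frozen contribution I would use the type-I bound \eqref{typeone} in the form $|w(t_n)-J_h w(t_n)|\le c_1^{1/2}h\,\|w(t_n)\|$, hence $|J_h w(t_n)|\le |w(t_n)|+c_1^{1/2}h\,\|w(t_n)\|$, together with the Poincar\'e inequality \eqref{poincarreV} and $|\eta_n|\le\alpha$; squaring produces the term $4(t-t_n)^2\mu^2\{(c_1h^2+\lambda_1^{-1})\|w(t_n)\|^2+\alpha^2\}$.

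For the dynamic contribution I would move the absolute value inside the integral, use $|\nu Aw(s)|=\nu|Aw(s)|$, and estimate the nonlinearity pointwise in $s$. This is where the real work lies: one needs $|B(w,U)+B(U,w)+B(w,w)|\le C\lambda_1^{-1/2}(2\rho_V+\|w(s)\|)|Aw(s)|$ using only $\|U\|\le\rho_V$ from Theorem~\ref{apriori}, with no $|AU|$ or $\rho_A$ surviving. For $B(w,U)$ and $B(w,w)$ I would use the Agmon-type bound \eqref{bbtwo}, namely $|B(w,U)|\le C|w|^{1/2}|Aw|^{1/2}\|U\|$ and $|B(w,w)|\le C|w|^{1/2}|Aw|^{1/2}\|w\|$, and then \eqref{poincarreDA} to replace $|w|^{1/2}|Aw|^{1/2}$ by $\lambda_1^{-1/2}|Aw|$. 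For $B(U,w)$ the Agmon bound would produce $|AU|$, so instead I would use the Ladyzhenskaya-type bound \eqref{bbone}, $|B(U,w)|\le C|U|^{1/2}\|U\|^{1/2}\|w\|^{1/2}|Aw|^{1/2}$, and combine $|U|\le\rho_H\le\lambda_1^{-1/2}\rho_V$ with $\|w\|\le\lambda_1^{-1/2}|Aw|$ from \eqref{poincarreDA}. Summing the three bounds and absorbing constants into $C$ gives the pointwise estimate, after which the Cauchy--Schwarz inequality applied to $\int_{t_n}^{t}\{\nu+C\lambda_1^{-1/2}(2\rho_V+\|w(s)\|)\}|Aw(s)|\,ds$ pulls out a factor $(t-t_n)^{1/2}$; squaring yields the integral term on the second line.

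The only genuine obstacle is the nonlinear estimate just sketched: one must choose between \eqref{bbone} and \eqref{bbtwo} for each bilinear term so that everything is controlled by $\rho_V$ and $\|w\|$ alone, which is exactly where one uses that the reference solution lies on the global attractor. The rest is routine bookkeeping with the triangle inequality, Cauchy--Schwarz, Poincar\'e, and the type-I bound.
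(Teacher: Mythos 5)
Your proposal is essentially the paper's own argument: the paper bounds $\big|{dw/dt}\big|$ pointwise using exactly the same trio of estimates (\eqref{bbtwo} for $B(w,U)$, \eqref{bbone} for $B(U,w)$, either one for $B(w,w)$, each reduced via Poincar\'e so that only $\rho_V$ and $\|w\|$ survive), the type-I bound \eqref{typeone} plus Poincar\'e for $|J_h w(t_n)|$, and then Cauchy--Schwarz on $\int_{t_n}^t |dw/ds|\,ds$; since the feedback and noise terms are constant on $[t_n,t_{n+1})$, this is the same computation as your integrate-the-equation route. The only slip is in constant bookkeeping: the iterated split $|a+b|^2\le 2|a|^2+2|b|^2$ you describe produces a factor $6$ (or $8$, depending on how you sub-split $(c_1^{1/2}h+\lambda_1^{-1/2})^2$) rather than $4$ in front of the $(t-t_n)^2\mu^2$ term; to get the stated constant, apply a single four-term inequality $(a_1+a_2+a_3+a_4)^2\le 4(a_1^2+a_2^2+a_3^2+a_4^2)$ to the pieces $\mu(t-t_n)c_1^{1/2}h\|w(t_n)\|$, $\mu(t-t_n)\lambda_1^{-1/2}\|w(t_n)\|$, $\mu(t-t_n)\alpha$ and the time integral, which is what the paper effectively does at the level of $|dw/dt|$.
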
  
\begin{proof}
Since the Cauchy--Schwarz inequality implies
$$
   |w(t)-w(t_n)|^2\le \Big(\int_{t_n}^t \Big|{dw(s)\over ds}\Big| ds\Big)^2
   \le (t-t_n) \int_{t_n}^t \Big|{dw(s)\over ds}\Big|^2 ds,
$$
it is enough to estimate
$$
	\Big|{dw\over dt}\Big| \le
		\nu |Aw| + |B(w,U)|+|B(U,w)|+|B(w,w)|
		+ \mu |J_h w(t_n)| + \mu |\eta_n|.
$$
Applying \eqref{bbone} and \eqref{bbtwo} 
to the nonlinear terms followed by the
Poincar\'e inequality yields
$$
|B(w,U)| 
	\le C |w|^{1/2} |Aw|^{1/2} \|U\|
	\le C \lambda_1^{-1/2} \rho_V |Aw|
$$
$$
|B(U,w)|
	\le C |U|^{1/2} \|U\|^{1/2} \|w\|^{1/2} |Aw|^{1/2}
	\le C \lambda_1^{-1/2} \rho_V |Aw|
$$
$$
|B(w,w)| 
	\le C |w|^{1/2} \|w\| |Aw|^{1/2}
	\le C \lambda_1^{-1/2} \|w\| |Aw|.
$$
Note also
$$
	|J_h w(t_n)| \le |w(t_n)-J_h w(t_n)| + |w(t_n)|
		\le (c_1^{1/2} h +\lambda_1^{-1/2})\|w(t_n)\|.
$$
Therefore
$$
	\Big|{dw\over dt}\Big|\le
	\big\{\nu + C\lambda_1^{-1/2}\big(2\rho_V+\|w\|\big)\big\}|Aw|
		+\mu (c_1^{1/2} h +\lambda_1^{-1/2})\|w(t_n)\|+\mu \alpha
$$
and so
$$
	\Big|{dw\over dt}\Big|^2\le
	4\big\{\nu + C\lambda_1^{-1/2}\big(2\rho_V+\|w\|\big)\big\}^2|Aw|^2
		+4\mu^2 (c_1 h^2 +\lambda_1^{-1})\|w(t_n)\|^2+4\mu^2 \alpha^2.
$$
Consequently,
\begin{align*}
	|w(t)-w(t_n)|^2 & \le 
		4(t-t_n)^2\mu^2
	\big\{(c_1h^2+\lambda_1^{-1}) 	
			\|w(t_n)\|^2 + \alpha^2\big\} \\
		& \qquad +
		4(t-t_n)\int_{t_n}^{t}
		\big\{\nu + C\lambda_1^{-1/2}\big(2\rho_V+\|w{\rev (s)}\|\big)\big\}^2
		|Aw{\rev (s)}|^2 {\rev ds},
\end{align*}
which was to be shown.
\end{proof}

With the above lemma in hand we now turn our attention to a version
of Theorem~\ref{mondaini} that will be used to prove 
our main theoretical result.
In addition to the already mentioned differences, 
Proposition~\ref{improvedpath} below provides an explicit
separation of the dependency of the exponential rate $\theta$ on
the maximum bound $\EO$ from the estimate on $\|w(t)\|$ which results
when $|\eta_n|$ is further bounded for a finite time by $\alpha$.
Note this separation of dependencies is required for the probabilistic
estimates which appear in Section~\ref{nudgemod}.

\begin{proposition}\label{improvedpath}
Let $\MO$ and $\EO$ be fixed.  There are positive
constants $c$, $\delta$, $\theta$, $h$ and $\mu$ with $\theta<1$
such that upon taking $t_n=t_0+n\delta$ the conditions
$\|w(t_k)\|\le\MO$ 
and $|\eta_n|\le \alpha$ where
$\alpha\le\EO$ for $n=k,\ldots,k+p$ imply
\begin{equation}\label{mygexp}
	\|w(t)\|^2\le \theta^{n-k}
	\|w(t_k)\|^2+
		c\alpha^2
\words{for all} t\in[t_n,t_{n+1}]
\end{equation}
and $n=k,\ldots,k+p$.
\end{proposition}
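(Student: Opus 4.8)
The plan is to run a discrete-in-time energy estimate on the difference equation \eqref{dnmethod} over a single interval $[t_n,t_{n+1})$ and then iterate. First I would take the inner product of \eqref{dnmethod} with $Aw$. Using the orthogonality property \eqref{oprop} to kill the term $(B(w,w),Aw)$ is not available here since $w$ is not a Navier--Stokes solution with matching forcing; instead the bilinear terms $(B(w,U)+B(U,w),Aw)$ are controlled by Proposition~\ref{etiti}, which supplies the logarithmic factor $(1+\log(|AU|/(\lambda_1^{1/2}\|U\|)))^{1/2}$ that is ultimately responsible for the logarithmic form of $\mu$ in Theorem~\ref{mondaini}. The remaining cubic term $(B(w,w),Aw)$ is estimated by \eqref{bbtwo} and absorbed using Young's inequality at the cost of a term like $C\lambda_1^{-1}\|w\|^2|Aw|^2$, which is small because $\|w(t_k)\|\le\MO$ is a standing assumption and, by Lemma~\ref{dodelay}, $\|w(s)\|$ cannot have grown too much on the interval. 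The feedback term $-\mu(J_hw(t_n),Aw)$ is split as $-\mu(w(t_n),Aw) + \mu(w(t_n)-J_hw(t_n),Aw)$; the first piece is rewritten as $-\mu(w,Aw) + \mu(w-w(t_n),Aw)$ so that $-\mu(w,Aw)=-\mu\|w\|^2$ produces the crucial damping, while the commutator-type term $\mu(w-w(t_n),Aw)$ and the interpolation-error term $\mu(w(t_n)-J_hw(t_n),Aw)$ (bounded using \eqref{typeone}) are handled by Young's inequality together with Lemma~\ref{dodelay}. The noise term contributes $-\mu(\eta_n,Aw)\le \mu\alpha|Aw|\le \tfrac{\nu}{8}|Aw|^2 + C\mu^2\alpha^2/\nu$.

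After these manipulations, absorbing all the $|Aw|^2$ contributions into the viscous term $\nu|Aw|^2$ — which requires $h$ small (so $\mu^2h^2$ terms are controlled), $\mu$ large enough relative to the logarithmic expression, and $\delta$ small (so that $(t-t_n)$-weighted integrals from Lemma~\ref{dodelay} are small) — one arrives at a differential inequality of the form
$$
	{d\over dt}\|w\|^2 + \mu\|w\|^2 \le C\mu^2\alpha^2 + (\text{terms from earlier on the interval controlled by }\|w(t_n)\|)
$$
valid on $[t_n,t_{n+1})$. Using the Poincaré inequality \eqref{poincarreV} one also has a lower bound $\nu\lambda_1\|w\|^2 \le \nu|Aw|^2$ available to reinforce the damping, so the effective decay rate over one step of length $\delta$ is a genuine constant $\theta<1$, roughly $\theta\approx e^{-c\mu\delta}$ up to the correction coming from the delay terms. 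Applying Grönwall over $[t_n,t_{n+1}]$ gives $\|w(t_{n+1})\|^2\le \theta\|w(t_n)\|^2 + c\alpha^2$, and for intermediate $t\in[t_n,t_{n+1}]$ the same estimate holds with $\|w(t_n)\|^2$ in place of $\|w(t_{n+1})\|^2$, which is enough since $\theta<1$.

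Finally I would iterate this one-step bound from $n=k$ to $n=k+p$. The recursion $a_{n+1}\le\theta a_n + c\alpha^2$ with $a_n=\|w(t_n)\|^2$ unwinds to $a_n \le \theta^{n-k}a_k + c\alpha^2(1-\theta)^{-1}$, and after renaming the constant $c(1-\theta)^{-1}$ as $c$ one obtains exactly \eqref{mygexp}. One must check along the way that the induction does not break because $\|w(t_n)\|$ exceeds $\MO$: since $\theta<1$ and $\alpha\le\EO$, the bound \eqref{mygexp} itself shows $\|w(t_n)\|^2\le\MO^2 + c\EO^2$, so one fixes the constants (choosing $\EO$, hence $c\EO^2$, consistently with the absorption inequalities, exactly as the ``$\MO$ and $\EO$ fixed'' hypothesis allows) to keep the quadratic terms negligible throughout. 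The main obstacle, and the place where care is genuinely needed, is the bookkeeping in the absorption step: one must verify that the \emph{same} choice of $(c,\delta,\theta,h,\mu)$ simultaneously (i) beats the logarithmic constant from Proposition~\ref{etiti}, (ii) controls the delay error from Lemma~\ref{dodelay}, whose right-hand side itself contains $\int|Aw|^2$, creating a mild feedback loop that must be closed by an absorption argument, and (iii) leaves a clean decay rate $\theta$ whose dependence on $\EO$ is isolated in the additive constant rather than hidden in $\theta$ — this separation being precisely what Section~\ref{nudgemod} will need for the probabilistic estimates.
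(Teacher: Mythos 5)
Your overall architecture (energy estimate on \eqref{dnmethod} tested against $Aw$, Proposition~\ref{etiti} for the mixed terms, splitting the feedback term to extract $-\mu\|w\|^2$, Lemma~\ref{dodelay} plus Young for the delay and interpolation errors, then a one-step contraction iterated in $n$) is exactly the paper's, including the remark that the dependence on $\EO$ must sit in the additive constant and not in $\theta$. But there is a genuine error at the cubic term. You assert that the orthogonality \eqref{oprop} ``is not available here since $w$ is not a Navier--Stokes solution,'' and you instead estimate $(B(w,w),Aw)$ via \eqref{bbtwo} and Young, leaving a term of the form $C\lambda_1^{-1}\|w\|^2|Aw|^2$. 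The premise is false: $(B(v,v),Av)=0$ is an algebraic identity valid for \emph{every} divergence-free, periodic $v\in{\cal D}(A)$ in two dimensions; it has nothing to do with $v$ solving the equations, and the paper applies it with $v=w$ to remove $(B(w,w),Aw)$ from the main energy identity entirely.

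This is not merely cosmetic, because your substitute step does not close. A term $C\lambda_1^{-1}\|w\|^2|Aw|^2$ can only be absorbed into $\nu|Aw|^2$ if $\|w\|\lesssim \nu\lambda_1^{1/2}$, a smallness condition that the hypotheses do not supply: $\MO$ and $\EO$ are fixed but arbitrary (in the application $\MO=\rho_V$, $\EO=3M$, both large compared with $\nu$), and no choice of large $\mu$ or small $h,\delta$ helps, since the offending term multiplies $|Aw|^2$ rather than $\|w\|^2$ and carries no factor of $(t-t_n)$. Your appeal to ``$\|w(t_k)\|\le\MO$ so the term is small'' conflates \emph{bounded} with \emph{small}. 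By contrast, in the paper the only place a $\|w\|$-dependent coefficient multiplies $|Aw|^2$ is inside the delay integral from Lemma~\ref{dodelay}, where the extra factor $\mu^2(t-t_n)^2/\nu$ lets one kill it by taking $\delta$ small relative to the bootstrap bound $S$ (which is where the $\MO,\EO$ dependence legitimately enters). So either invoke \eqref{oprop} as the paper does, or rework the Young splitting (e.g.\ via \eqref{bbone} with exponent $4/3$) so the penalty lands on $\|w\|^2$ with a large coefficient to be beaten by $\mu$ chosen depending on $\MO,\EO$, together with a continuity/bootstrap argument keeping $\|w\|$ below a fixed multiple of $S$ on each interval; as written, the absorption step fails. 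The remainder of your sketch (the recursion $a_{n+1}\le\theta a_n+c\alpha^2$ and its unwinding) matches the paper once the one-step inequality is actually established.
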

\begin{proof}
Taking the inner product of \eqref{dnmethod} with $Aw$ and using the
orthogonality~\eqref{oprop} we have
\begin{align}\label{wevolve}
	{1\over 2}{d\|w\|^2\over dt}+\nu|Aw|^2
		&+\big(B(w,U)+B(U,w),Aw\big)
	= -\mu\big(J_h w(t_n)+\eta_n,Aw\big).
\end{align}
First, use Proposition~\ref{etiti} to estimate the non-linear terms as
$$
	\big|\big(B(w,U)+B(U,w),Aw\big)\big| 
	\le L \|w\| |Aw| \le {2L^2\over \nu} \|w\|^2 + {\nu\over 8}|Aw|^2,
$$
where
$$
	L=
	C\rho_V\Big( 1+\log{\rho_A\over \lambda_1^{1/2}\rho_V}\Big)^{1/2}.
$$
The last term in the inner product on the right side of \eqref{wevolve} 
may be estimated as
$$
	\mu\big|(\eta_n,Aw)\big|
	\le \mu|\eta_n||Aw|
	\le \mu \alpha|Aw|
	\le 
{2\mu^2\over\nu} \alpha^2
	+{\nu\over 8} |Aw|^2.
$$

Now write
$$
	-\mu\big(J_h w(t_n),Aw\big)
	=
	\mu\big(w(t_n)-J_h w(t_n),Aw\big)
	+\mu\big(w-w(t_n),Aw\big)
	-\mu \|w\|^2.
$$
Since the Cauchy-Schwarz inequality and \eqref{typeone} implies
\begin{align*}
	\mu\big|\big(w(t_n)-J_h w(t_n),Aw\big)\big|
	&\le {2\mu^2\over\nu}c_1 h^2\|w(t_n)\|^2
		+{\nu\over 8}|Aw|^2
\end{align*}
and
\begin{align*}
	\mu\big|\big(w-w(t_n),Aw\big)\big|
	&\le
	{2\mu^2\over\nu}|w-w(t_n)|^2 +{\nu\over 8}|Aw|^2,
\end{align*}
it follows that
\begin{align*}
	{d\|w\|^2\over dt}+2\Big(
		\mu-{2L^2\over \nu}
		\Big)\|w\|^2&+\nu|Aw|^2 \le
	{4\mu^2\over\nu}
	\big(
		|w-w(t_n)|^2+c_1 h^2\|w(t_n)\|^2+\alpha^2
	\big).
\end{align*}
Choosing $\mu$ large enough that $\mu\nu\ge 4L^2$ and 
applying Lemma~\ref{dodelay} we have
\begin{align*}
	{d\|w\|^2\over dt}+\mu \|w\|^2+\nu|Aw|^2 &-
		{16\mu^2(t-t_n)\over \nu}\int_{t_n}^{t}
		\big\{\nu + C\lambda_1^{-1/2}\big(2\rho_V+\|w{\rev (s)}\|\big)\big\}^2
		|Aw{\rev (s)}|^2 {\rev ds}\\
	&\le R_1(t-t_n) \|w(t_n)\|^2+ R_2(t-t_n)\alpha^2,
\end{align*}
where
$$
	R_1(\tau)= {4\mu^2\over\nu}
		\big\{4\tau^2\mu^2(c_1h^2+\lambda_1^{-1})+ c_1h^2
		\big\}
\wwords{and}
	R_2(\tau)= {4\mu^2\over\nu}
		(4\tau^2\mu^2+1).
$$
Consequently
\begin{align*}
	{d\over dt}\big(\|w\|^2e^{\mu t}\big)&+e^{\mu t_n}\nu|Aw|^2 \\
		&-
		e^{\mu t_{n+1}}{16\mu^2(t-t_n)\over \nu}\int_{t_n}^{t}
		\big\{\nu + C\lambda_1^{-1/2}\big(2\rho_V+\|w{\rev (s)}\|\big)
			\big\}^2|Aw{\rev (s)}|^2 {\rev ds}\\
	&\le e^{\mu t}R_1(t-t_n)\|w(t_n)\|^2 + e^{\mu t}R_2(t-t_n)\alpha^2.
\end{align*}

Integrate over the interval $[t_n,t]$.  Estimate the resulting 
double integral as
\begin{align*}
		\int_{t_n}^t
		{16\mu^2(\tau-t_n)\over \nu}\int_{t_n}^{\tau}&
		\big\{\nu + C\lambda_1^{-1/2}\big(2\rho_V
			+\|w(s)\|\big)\big\}^2|Aw(s)|^2 ds\, d\tau\\
	&\le
		{16\mu^2(t-t_n)^2\over \nu}\int_{t_n}^{t}
		\big\{\nu + C\lambda_1^{-1/2}\big(2\rho_V
			+\|w{\rev (s)}\|\big)\big\}^2|Aw{\rev (s)}|^2 {\rev ds}.
\end{align*}
Then use the fact that $R_1$ and $R_2$ are increasing functions to obtain
\begin{align*}
	\|w\|^2e^{\mu t}&+
		e^{\mu t_n} \int_{t_n}^t \Big(\nu
			-e^{\mu \delta}{16\mu^2(t-t_n)^2\over \nu}
			\big\{\nu + C\lambda_1^{-1/2}\big(2\rho_V+\|w{\rev (s)}\|\big)\big\}^2
		\Big)|Aw{\rev (s)}|^2 {\rev ds}\\
	&\le 
\|w(t_n)\|^2e^{\mu t_n}+
	{e^{\mu t}-e^{\mu t_n}\over\mu}\Big\{R_1(t-t_n) \|w(t_n)\|^2
		+R_2(t-t_n)\alpha^2\Big\}.
\end{align*}
Therefore,
\begin{align}\label{wrecur}
	\|w\|^2&+
		e^{-\mu (t-t_n)} \int_{t_n}^t \Big(
			\nu -e^{\mu \delta}{16\mu^2(t-t_n)^2\over \nu}
			\big\{\nu + C\lambda_1^{-1/2}\big(2\rho_V
		+\|w{\rev (s)}\|\big)\big\}^2
		\Big)|Aw{\rev (s)}|^2{\rev ds}\nonumber\\
	&\le 
	\varphi(t-t_n)
\|w(t_n)\|^2
		+{1-e^{-\mu(t-t_n)}\over\mu}R_2(t-t_n) \alpha^2,
\end{align}
where $$\varphi(\tau)=e^{-\mu \tau}+{1-e^{-\mu\tau}\over\mu}R_1(\tau).$$

Since
$$
	\varphi'(\tau)=-\mu e^{-\mu t}+e^{-\mu\tau}
		R_1(\tau)+{1-e^{-\mu\tau}\over\mu}R_1'(\tau)
$$
choosing $h$ small enough implies
$$
	\varphi'(0)
	=-\mu+R_1(0)
	=-\mu+{4\mu^2 \over\nu} c_1h^2<0.
$$
As $\varphi(0)=1$ and $R_1(0)<\mu$ it follows
there is $\varepsilon>0$ such that 
$$\varphi(\tau)< 1
\wwords{and} R_1(\tau)<\mu
\wwords{for}
\tau\in(0,\varepsilon].$$
We emphasize at this point that $\varepsilon$ is independent
of $\alpha$, $\MO$ and $\EO$ but depends on $h$, $\mu$, $\nu$ and
other parameters of the system.

Now set
$$
	S^2= \max\Big\{\, \MO^2,\, 
		{1-e^{-\mu\varepsilon}\over\mu} R_2(\varepsilon)\EO^2,\,
		{R_2(\varepsilon)\over\mu-R_1(\varepsilon)}\EO^2
	\,\Big\}
$$ 
and choose $\delta\le\varepsilon$ so small that
$$
			\nu -e^{\mu \delta}{16\mu^2\delta^2\over \nu}
			\big\{\nu + C\lambda_1^{-1/2}\big(2\rho_V+2^{1/2}S\big)\big\}^2
		\ge 0.
$$
By hypothesis $\|w(t_k)\|\le \MO\le S<2^{1/2}S$.
Under the assumption on $\delta$ we obtain
$$
            \nu -e^{\mu \delta}{16\mu^2(t-t_n)^2\over \nu}
            \big\{\nu + C\lambda_1^{-1/2}\big(2\rho_V+\|w{\rev (s)}\|\big)\big\}^2> 0
$$
{\rev for $n=k$ at $s=t_k$ and any $t\in[t_k,t_{k+1}]$.}
Therefore, by continuity
the integral term on the left-hand side of \eqref{wrecur} drops out
over some non-trivial maximal interval.  But then
\begin{align*}
	\|w(t)\|^2
	&\le \varphi(t-t_k) \|w(t_k)\|^2
		+{1-e^{-\mu(t-t_k)}\over \mu}R_2(t-t_k)\alpha^2\\
	&\le \|w(t_k)\|^2
		+{1-e^{-\mu\varepsilon}\over\mu}R_2(\varepsilon)\EO^2
		\le 2S^2
\end{align*}
shows this interval is at least as large as $[t_{\rev k},t_{\rev k+1}]$.

Taking $t=t_{k+1}$ and $\theta=\varphi(\delta)$ 
yields
$$
	    \|w(t_{k+1})\|^2
    \le \theta \|w(t_k)\|^2 +{1-e^{-\mu\delta}\over\mu}R_2(\delta)\alpha^2.
$$
Moreover, since
$$
	\theta=e^{-\mu\delta}+{1-e^{-\mu\delta}\over\mu}R_1(\delta)
\wwords{implies}
	1-\theta={1-e^{-\mu\delta}\over\mu}\big(\mu- R_1(\delta)\big)
$$
then substituting further obtains
\begin{align*}
	\|w(t_{k+1})\|^2
	&\le \theta\|w(t_k)\|^2+ (1-\theta) {R_2(\delta)\over
		\mu-R_1(\delta)}\alpha^2.
\end{align*}
Noting $R_2(\tau)/\big(\mu-R_1(\tau)\big)$ is an increasing function of
$\tau$ and that $\alpha\le\EO$ results in
$$
	\|w(t_{k+1})\|^2\le \theta\|w(t_k)\|^2+ (1-\theta) {R_2(\varepsilon)\over
		\mu-R_1(\varepsilon)}\EO^2\le S^2.
$$

By induction it follows that $\|w(t_n)\|^2\le S^2$ 
holds for all $n=k,\ldots,k+p$ and consequently
$$
	\|w(t_{n+1})\|^2
	\le \theta\|w(t_n)\|^2+ (1-\theta) 
		{R_2(\delta)\over
		\mu-R_1(\delta)}\alpha^2
\words{for all} n=k,\ldots,k+p.
$$
Another induction immediately yields that
\begin{align*}
	    \|w(t_{n})\|^2
    &\le \theta^{n-k} \|w(t_k)\|^2
        + (1-\theta) {R_2(\delta)\over
        \mu-R_1(\delta)} \alpha^2 \sum_{j=0}^{n-k-1} \theta^j\\
    &\le \theta^{n-k} \|w(t_k)\|^2
        + {R_2(\delta)\over
        \mu-R_1(\delta)} \alpha^2
\end{align*}
and also 
for $t\in[t_n,t_{n+1}]$ that
$$
	\|w(t)\|^2\le \varphi(t-t_n) \|w(t_n)\|^2
		+ {1-e^{-\mu(t-t_n)}\over\mu} R_2(t-t_n)\alpha^2.
$$
Finally, combining the above two inequalities we obtain
$$
	\|w(t)\|^2\le \|w(t_n)\|^2+
        {R_2(\delta)\over
        \mu-R_1(\delta)} \alpha^2
    \le \theta^{n-k} \|w(t_k)\|^2
        + c\alpha^2,
$$
where $c=2R_2\big(\delta)/(\mu-R_1(\delta)\big)$.
Noting that $c$, $\delta$, $\theta$, $h$ and $\mu$ depend on $\EO$ and 
$\MO$ but are independent of $\alpha$ and $\|w(t_k)\|$ finishes the proof.
\end{proof}

\section{Bounds on the Expectation}\label{nudgemod}

In this section we employ the modification \eqref{I0def} to the 
delay-nudging method of \cite{Foias2016} to handle the 
outliers that result from Gaussian noise processes.
Our criterion for the identification of outliers in the observational
data is based on known {\it a priori\/} bounds of the global attractor 
for the reference solution.

Intuitively, if $M$ is very large then $\widetilde J_h^o$ is the same 
as $\widetilde J_h$ most of the time.  On the other hand, the few cases
where these interpolants are different is sufficient to ensure the 
probability distribution behind the modified noise process $\eta_n^o$ 
satisfies the hypothesis of Proposition~\ref{improvedpath} and 
{\rev is bounded}.
We remark that the $\eta_n^o$ do not form a 
sequence of independent identically distributed random variables since
the removal of outliers is relative to the size of $U(t_n)$ 
at different points of time.

Fortunately, the pathwise bounds obtained 
in the previous section
do not require the noise be independent or identically distributed;
therefore, upon noting
$\widetilde J_h^o U(t_n)=J_h U(t_n)+\eta_n^o$ and 
$|\eta_n^o|\le\EO$ for suitable choice of $M$ and $\EO$,
Proposition~\ref{improvedpath} can be applied directly to the 
modified nudging method.  To find a suitable $M$ and $\EO$ 
we first prove

\begin{lemma}\label{noisebnd}
If $M \ge \rho_H+2\pi c_1^{1/2} \rho_V$, then
for all $h\le2\pi$ holds~$|\eta^o_n|\le \EO$ where $\EO=3M$.
\end{lemma}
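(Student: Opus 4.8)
The plan is to argue by cases according to whether the raw observation $\widetilde J_h U(t_n)$ survives the truncation in the definition~\eqref{I0def} of $\widetilde J_h^o$, after first isolating one clean deterministic estimate: a bound on $|J_h U(t_n)|$ that holds independently of the noise. Since the reference solution is assumed to have evolved from the distant past and hence lies on the global attractor ${\cal A}$, Theorem~\ref{apriori} gives $|U(t_n)|\le\rho_H$ and $\|U(t_n)\|\le\rho_V$. Combining the triangle inequality with the type-I interpolant estimate~\eqref{typeone},
\[
	|J_h U(t_n)|\le |U(t_n)| + |U(t_n)-J_h U(t_n)|
		\le \rho_H + c_1^{1/2}h\,\rho_V ,
\]
so that for $h\le 2\pi$ and $M\ge \rho_H+2\pi c_1^{1/2}\rho_V$ we obtain the key inequality $|J_h U(t_n)|\le M$.

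With this in hand the two cases are immediate. If $|\widetilde J_h U(t_n)|>2M$, then $\widetilde J_h^o U(t_n)=0$ and hence $\eta_n^o=-J_h U(t_n)$, so $|\eta_n^o|\le M\le 3M=\EO$. If instead $|\widetilde J_h U(t_n)|\le 2M$, then $\widetilde J_h^o U(t_n)=\widetilde J_h U(t_n)$ and $\eta_n^o=\widetilde J_h U(t_n)-J_h U(t_n)$; the triangle inequality together with the truncation threshold and the deterministic bound above gives $|\eta_n^o|\le |\widetilde J_h U(t_n)|+|J_h U(t_n)|\le 2M+M=3M=\EO$. In either case $|\eta_n^o|\le\EO$, which is the claim.

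There is no real obstacle here; the lemma is essentially a bookkeeping step recording why truncating at level $2M$ produces a uniformly bounded modified noise process. The only point requiring care is the invocation of Theorem~\ref{apriori}, which is legitimate precisely because of the standing assumption that the reference solution lies on ${\cal A}$ — the same assumption that motivates using the absorbing ball as the outlier criterion. It is worth stressing in the write-up that the bound is entirely deterministic: no probabilistic input is used, so $|\eta_n^o|\le\EO$ holds pathwise for every realization of the Gaussian noise, which is exactly what is needed to feed the modified noise process into the pathwise estimate of Proposition~\ref{improvedpath} in the next step of the argument.
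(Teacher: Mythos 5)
Your proof is correct and follows essentially the same route as the paper: the deterministic bound $|J_h U(t_n)|\le \rho_H + c_1^{1/2}h\rho_V\le M$ via Theorem~\ref{apriori} and \eqref{typeone}, followed by the same two-case split on whether $|\widetilde J_h U(t_n)|>2M$, yielding $|\eta_n^o|\le M$ or $|\eta_n^o|\le 3M$ respectively. Nothing is missing; the remarks on the pathwise (non-probabilistic) nature of the bound are consistent with how the paper uses the lemma.
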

\begin{proof}
First note that
\begin{equation}\label{onem}
		|J_h U(t_n)|
		\le |U(t_n)|+|U(t_n)-J_h U(t_n)|
		\le \rho_H+c_1^{1/2}h\|U(t_n)\|
		\le M.
\end{equation}
Now, if $|\widetilde J_h U(t_n)|>2M$ then $\widetilde J_h^o U(t_n)=0$.
Therefore
\begin{align*}
	|\eta^o_n|
		&=|\widetilde J_h^o U(t_n)-J_h U(t_n)|
		=|J_h U(t_n)|\le M.
\end{align*}
If $|\widetilde J_h U(t_n)|\le 2M$ then $\eta^o_n=\eta_n$.
Therefore
\begin{align*}
	|\eta^o_n|
		&
	=|\eta_n|
		=|\widetilde J_h U(t_n)-J_h U(t_n)|
		\le
		|\widetilde J_h U(t_n)|+ |J_h U(t_n)|\le 2M+M=3M.
\end{align*}
Since $|\eta^o_n|\le \EO$ in either case, the
bound follows.  
\end{proof}

We remark that since $|\eta^o_n|\le\EO$ for all $n$
it immediately follows from Proposition~\ref{improvedpath} that there
exists $c$, $\delta$, $h$ and $\mu$
such that
\begin{equation}\label{badbound}
	\limsup_{t\to\infty} \|U-u\|^2 \le c \EO^2.
\end{equation}
The drawback of \eqref{badbound} is the upper bound does 
not depend on the variance of the Gaussian noise process.
Moreover, as there is always a chance of obtaining a long 
sequence of outliers in a row, it appears not possible to 
obtain better pathwise bounds.

{\rev For example, if the observational errors were such 
that $|\widetilde J_h U(t_n)|>2M$ for all $t_n$ over an
interval of length $T$, then for those $t_n$ the modified
nudging method would evolve the approximating 
solution independently of the observations as
$$
	{du\over dt}=\nu Au+B(u,u) = f - J_h u(t_n)
\words{for} t\in[t_n,t_{n+1}).
$$
Although these dynamics are different than those which 
govern the reference solution, the main difficulty
is lack of any coupling along with sensitive dependence
on initial conditions would lead $U$ and $u$ to 
become decorrelated when $T$ is large.
Thus,
no pathwise bounds qualitatively better than \eqref{badbound} 
are available because a sequence 
of such outliers, though unlikely, will repeatedly 
occur with probability one over any infinite period of time.}

We now proceed to the proof of 
Theorem~\ref{mnudge},
our main theoretical result,
which provides a bound on
$\E\big[\|U-u\|^2\big]$ that vanishes
as $\sigma^2$ tends to zero.

\begin{proof}[Proof of Theorem~\ref{mnudge}]
Fix $\MO=\rho_V$ and $\EO=3M$ 
where $M=\rho_H+2\pi c_1^{1/2}\rho_V$.
Note that 
$u(t_0)=0$ and $U(t_0)\in{\cal A}$ implies
$$\|w(t_0)\|=\|U(t_0)-u(t_0)\|=\|U(t_0)\|\le\rho_V=\MO.$$
Now, provided $h\le 2\pi$ then Lemma~\ref{noisebnd} implies 
the noise present in
the modified interpolant satisfies
$\|\eta^o_n\|_{L^2}\le \EO$ for all $n$.
It follows from Proposition~\ref{improvedpath}
that there are positive constants $c$, $\delta$,
$\theta$, $h$ and $\mu$ such that~\eqref{mygexp} holds.
Therefore, upon setting $\alpha=\EO$ we obtain
$$
	\|w(t)\|^2 \le cK
\words{for all} t\ge t_0
\wwords{where}
cK=\rho_V^2+c \EO^2.
$$

We remark that the choice of $h$ provided above fixes the 
interpolant $J_h$
and consequently the exact form of the noise process $\eta_n$ in 
$\widetilde J_h$.
In particular $N$ and the unit vectors $\psi_i\in H$ for 
$i=1,\ldots,2N$ should be considered fixed for the rest of the proof.

For each $\alpha$
such that $0<\alpha\le M$ let $n_\alpha$ be the value of $n$
such that
$ \theta^n K \le \alpha^2 < \theta^{n-1} K$.
Since $K>\EO^2> \alpha^2$ it follows that $n_\alpha\ge 1$.
Consequently, if 
$$|\eta_n|\le\alpha
\wwords{for} n=k,\ldots,k+n_\alpha$$
then by $\eqref{onem}$ we have
$$
	|\tilde J_h U(t_n)|=|J_h U(t_n)|+|\eta_n| \le M+\alpha \le 2M.
$$
This means $\eta_n=\eta^o_n$ and so $|\eta^o_n|\le\alpha$.
It again follows from \eqref{mygexp} with the same choices of
$c$, $\delta$, $\theta$, $h$ and $\mu$ as before that
$$
	\|w(t)\|^2\le \theta^{\rev n-k} \|w(t_k)\|^2+c\alpha^2
		\le 2c\alpha^2
\words{for} t\in [t_n,t_{n+1}] \words{where} n={\rev k+n_\alpha}.
$$
Therefore,
\begin{align}\label{pest1}
    {\bf P}\big\{\|w(t)\|^2 \le 2c\alpha^2\big\}
		&\ge
	{\bf P}\big\{|\eta_n| \le \alpha
		\hbox{ for } n=k,\ldots,k+n_\alpha\big\}\nonumber\\
	&= {\textstyle \prod_{n=k}^{k+n_\alpha} }\,
		{\bf P}\big\{|\eta_n| \le \alpha\big\}.
\end{align}

Recall the $\eta_n$ are distributed as in \eqref{etadist} where
$|\eta|^2=X^T \Psi X \le \|\Psi\| \|X\|^2$.
Here $\|\Psi\|$ denotes the spectral norm of the $2N\times 2N$ matrix $\Psi$
and $\|X\|$ the Euclidean norm of the random vector~$X$.
Thus,
\begin{equation}\label{Xineq}
	{\bf P}\big\{|\eta_n|\le \alpha\big\}=
	{\bf P}\big\{|\eta|\le \alpha\big\}
		\ge {\bf P}\big\{\|X\|^2 \le \alpha^2/\|\Psi\|\big\}.
\end{equation}
We emphasize that $\|\Psi\|$ is simply a constant at this point
as we have already fixed~$N$ and~$\psi_i$ earlier 
in the proof.  Note also that $\Psi$ is
independent of $\alpha$ and $\sigma$.

Substituting \eqref{Xineq} into \eqref{pest1} yields
$$
	{\bf P}\big\{\|w(t)\|^2 \le 2c\alpha^2\big\}\ge
		\varphi
\wwords{where}
		\varphi={\bf P}\big\{\|X\|^2 \le 
			\alpha^2/\|\Psi\|\big\}^{{\rev n_\alpha}+1}.
$$
Since 
$$
	\E\big[\|w(t)\|^2\big]
		\le 2c\alpha^2{\bf P}\big\{\|w(t)\|^2\le 2c\alpha^2\big\}
		+{\bf P}\big\{\|w(t)\|^2>2c\alpha^2\big\} cK,
$$
then $2c\alpha^2\le 2c M^2=2c(\EO/3)^2< cK$ implies
\begin{equation}\label{modthis}
	\E\big[\|w(t)\|^2\big]
		\le \varphi(2c\alpha^2) + (1-\varphi)cK
\words{for}
	t\in[t_n,t_{n+1}]\words{where} n=n_\alpha.
\end{equation}

Now rewrite the bound in \eqref{modthis}
in terms of $\sigma$ by choosing a suitable 
$\alpha$.  By definition
$$
	n_\alpha \log\theta \le \log(\alpha^2/K) < (n_\alpha-1)\log\theta
$$
and so
$$
	n_\alpha-1 \le {\log(K/\alpha^2)\over\log(1/\theta)} < n_\alpha.
$$
Therefore,
$$
	\varphi \ge
{\bf P}\big\{ \|X\|^2 \le \alpha^2/\|\Psi\|\big\}
	^{\log(K/\alpha^2)/\log(1/\theta)+2}
=
{\bf P}\big\{ \|X\|^2 \le \alpha^2/\|\Psi\|\big\}
	^{(\log(K/\alpha^2)+\kappa)/\log(1/\theta)},
$$
where $\kappa=2\log(1/\theta)$.

Assuming $8\sigma^2\le\alpha^2/\|\Psi\|$, then 
{\rev Lemma}~\ref{concentration} used as \eqref{concbound} implies
\begin{align*}
	{\bf P}\big\{\|X\|^2\le \alpha^2/\|\Psi\|\big\}
		= 1-{\bf P}\big\{\|X\|^2\ge \alpha^2/\|\Psi\|\big\}
		\ge 1- e^{-x},
\end{align*}
where {\rev $x>0$ is defined by}
$2\sigma^2\sqrt x + 2\sigma^2x = \alpha^2/\|\Psi\|-\sigma^2$.
Simple estimates then yield
$$
	x\ge 
		\Big({\sqrt{15}- 1\over \sqrt{32}}\Big)^2
	{\alpha^2\over \sigma^2\|\Psi\|}
	> 
		{\alpha^2\over 4\sigma^2\|\Psi\|}.
$$
{\rev We remark that although $\sigma^2<\alpha^2/\|\Psi\|$ is
sufficient to apply Lemma~\ref{concentration}, the explicit
lower bound on $x$ scaling as $\alpha^2/\sigma^2$ is used
in our subsequent estimates.}
It follows that
$$
	{\bf P}\{\, \|X\|^2\le \alpha^2/\|\Psi\|\,\} 
	\ge 1- e^{-\gamma \alpha^2/\sigma^2}
\wwords{where}
	\gamma={1\over 4\|\Psi\|}.
$$

By Bernoulli's inequality
$$
	\varphi
	\ge \big(1-e^{-\gamma\alpha^2/\sigma^2}\big)^{(\log(K/\alpha^2)
		+\kappa)/\log(1/\theta)}
	\ge 1-e^{-\gamma\alpha^2/\sigma^2}\big(\log(K/\alpha^2)
		+\kappa\big)/\log(1/\theta)
$$
so that
$$
	1-\varphi
	\le e^{-\gamma\alpha^2/\sigma^2}
		\big(\log(K/\alpha^2)+\kappa\big)/\log(1/\theta).
$$
{\rev Since ${\cal E}_0=3M$ and $\alpha\le M$
then $9\alpha^2\le{\cal E}_0^2$.  The definition $cK=\rho_V^2+c{\cal E}_0^2$
then implies $9\alpha^2<K$ or that $K/\alpha^2\ge 9$.
Moreover, $\kappa/\log(1/\theta)=2$.  Therefore}
$$
	\log(K/\alpha^2)+\log\Big(
        {\log(K/\alpha^2)+\kappa\over\log(1/\theta)}\Big)\ge \log 9+\log 2
		=\log(18).
$$
Setting
\begin{equation}\label{sigmaval}
		\sigma^2 = \gamma
	\alpha^2\Big\{
		\log(K/\alpha^2)+\log\Big(
		{\log(K/\alpha^2)+\kappa\over\log(1/\theta)}\Big)\Big\}^{-1}
\end{equation}
implies
$$
	8\sigma ^2 \le {8\gamma \alpha^2\over\log(18)}{\rev{}={}}
		{8\alpha^2 \over 4 \|\Psi\| \log(18)} <
		{\alpha^2\over \|\Psi\|}.
$$
Thus, the previous assumption on the smallness of $\sigma$ is satisfied.
Moreover, it follows that
$$
   (1-\varphi)K \le \alpha^2.
$$
Consequently
$$
	\E\big[ \|w(t)\|^2\big] \le
	\varphi (2c\alpha^2)+(1-\varphi)cK \le 3c \alpha^2.
$$

Finally, since $8\sigma^2\le \alpha^2/\|\Psi\|$ then
$$
	\log(K/\alpha^2)\le \log(C_1/\sigma^2)
	\wwords{where} C_1={K\over 8\|\Psi\|}.
$$
Substitute this inequality into \eqref{sigmaval} to obtain
$$
	\alpha^2\le {\sigma^2\over \gamma}
	\Big\{
		\log(C_1/\sigma^2)+\log\Big(
		{\log(C_1/\sigma^2)+\kappa\over\log(1/\theta)}\Big)\Big\}.
$$
Defining
$$
	f(\sigma)=
        \log(C_1/\sigma^2)+\log\Big(
        {\log(C_1/\sigma^2)+\kappa\over\log(1/\theta)}\Big)
\words{and}
	C_0={3c\over \gamma}=12c\|\Psi\|
$$
then finishes the proof.
\end{proof}

\section{Comparison with Simulation}\label{compute}

The purpose of this section is to compare the analytical results
from Section~\ref{nudgemod} to numerical simulation, 
note areas of interest for future work and make some concluding remarks.
To this end we consider a natural class of 
type-I interpolants obtained from noisy observational measurements 
given by local spatial averages taken near distinct points in space.
We then choose numerical values of $h$, $\delta$ and $\mu$ that
{\rev do not} necessary satisfy the conditions of Proposition~\ref{improvedpath}
but work in practice.
Note that the existence of such optimistic 
choices for $h$, $\delta$ and $\mu$
is guided by previous computational experience,
see for example \cite{Olson2003}, \cite{Olson2008} and \cite{Gesho2016},
and stems from 
the fact that even the simplest
analytic bounds which determine $\rho_H$ and $\rho_V$ appear orders
of magnitude too large.
{\rev Moreover, the existing proof techniques rely essentially on 
the dissipation even though convection appears to play an important
physical role in generating the small scales from the large.}

Before beginning we remark that all computations were performed 
on a {\rev $512\times 512$}
spatial grid using a fully-dealiased pseudo-spectral method with time 
steps of size $dt={\rev .015625}$ carried out
by means of the fourth-order exponential time integrator proposed 
by Cox and Matthews~\cite{Cox2001} and regularized 
as suggested by Kassam and {\rev Trefethen}~\cite{Kassam2005}.

Let $d(x,y)$ be distance in the $2\pi$-periodic domain
$\T$ given by
$$d(x,y) = \min \big\{\, |x-y+2\pi m| : m\in\Z^2\,\big\}.$$
Choose $x_i\in\T$ for $i=1,\ldots,N$ 
and for $r>0$ fixed consider the noise-free measurements $m_j$
at time $t$ about those locations given by
$$
	m_j(t)=
		{1\over\pi r^2}
		\int_{d(y,x_j)<r} U(t,y)dy.
$$
We remark that the integrals over balls of radius 
$r$ in the definition of $m_j$ represent the
fact that physical instrumentation always measures a local average of
the velocity near a point rather than the exact velocity at a single
point in space.  Mathematically, this averaging provides a regularizing
effect proven in Appendix~\ref{localaverages} 
that leads to a type-I interpolant observable.  In our
numerics we take $r=2\pi \sqrt{6}/{\rev 512}\approx {\rev 0.0300598}$.  This corresponds
to an average of 21 points from the spatial grid per measurement.

\begin{figure}[h!]
    \centerline{\begin{minipage}[b]{0.75\textwidth}
    \caption{\label{interpolant}%
    The left shows the Voronoi tessellation resulting from $81$ 
    points arranged in a rectangular grid; 
    the right illustrates the same same number of points
    chosen at random.
	The circles depict the discs 
    over which the local averages are taken that represent
    the observational measurements of the velocity field.}
    \end{minipage}}
    \centerline{
		\includegraphics[height=0.35\textwidth]{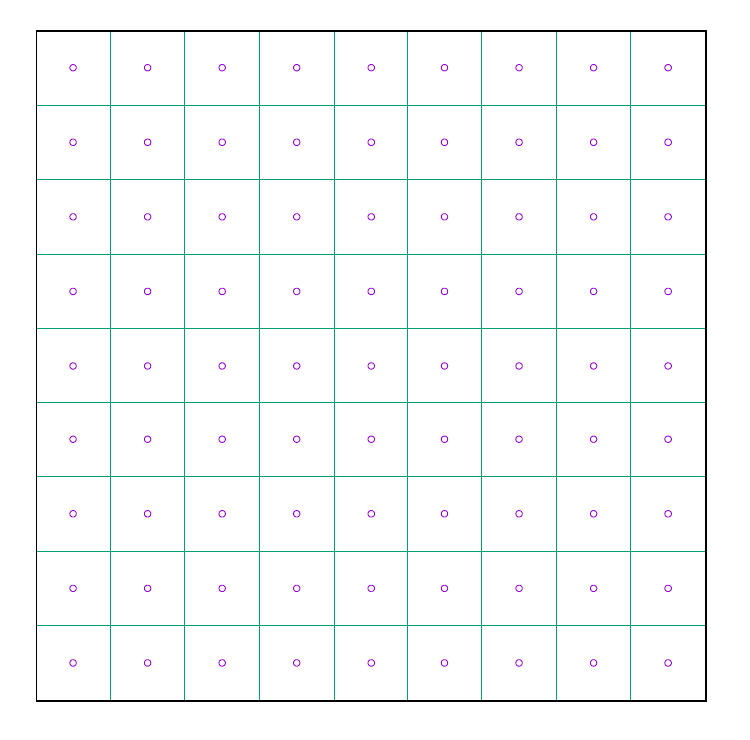}
		\includegraphics[height=0.35\textwidth]{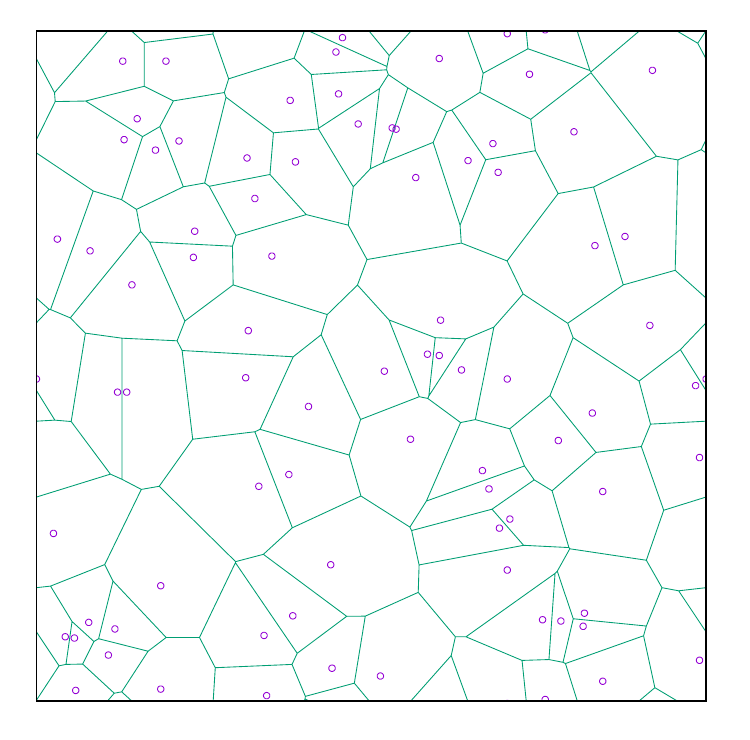}
	}
\end{figure}

Interpolate the measurements $m_j$ every $\delta$ units in time using 
simple a
piecewise-constant interpolant $I_h$ onto the Voronoi 
tessellation given by the points $x_i$.
Thus,
\begin{equation}\label{noisefree}
I_h U(t_n,x)=\sum_{j=1}^N \chi_j(x)m_j(t_n)
\wwords{where} t_n=\delta n
\end{equation}
and $\chi_j$ is the characteristic function given by
\begin{plain}$$
	\chi_j(x)=\cases{
		1 & for $d(x,x_j)\le\min\big\{\,d(x,x_i):i=1,\ldots,N\,\big\}$\cr
		0 & otherwise.
	}
$$\end{plain}%

On the left in Figure \ref{interpolant} is a depiction of the tessellation 
when the points $x_i$ lie on a regular grid.
The right illustrates points chosen randomly.
In the present paper we take $\delta=1$ and focus on the 
regular $9\times 9$ grid with $N=81$
while noting when the points $x_i$ do not lie on such a grid that 
convergence of the approximating solution to the free-running
solution can be more erratic.
{\rev Similar effects have been observed by
Desamsetti, Dasari, Langodan, Titi, Knio and 
Hoteit~\cite{Desamsetti2019} in the context of an operational
mesoscale weather-prediction system
and also by Carlson, Van Roekel, Godinez, Petersen and 
Larios~\cite{Carlson2021}
for an ocean model test case that simulates a wind-driven
double gyre.
Further study of observational data that exhibits areas of 
low-resolution spatial measurements along with localized
areas of high-resolution measurements}
is planned for a future work.
We remark that the related situation where all 
observations as well as the effects of 
the nudging are confined to a subdomain of $\T$ was considered 
by Biswas, Bradshaw and Jolly in \cite{Biswas2021}.

Define the length scale $h$ by
$$h=\max_{x\in\T} \min\big\{\, d(x,x_i): i=1,\ldots,N\,\big\}$$
where intuitively $1/h$ is the minimum observational density.
For a regular $9\times 9$ grid one has $h=\pi\sqrt{2}/9\approx 0.49365$;
however, due to the {\rev $512\times 512$} spatial 
discretization of the domain $\T$ our numerics actually
satisfied $h={\rev 56\pi\sqrt{2}/512}\approx 0.48594$.

Under the assumptions stated in Theorem~\ref{thmintobs} 
there exists a constant $c_0$ such that
$$
	\|\Phi-I_h \Phi\|_{L^2}^2 \le c_0 h^2 \|\Phi\|^2\wwords{for all} \Phi\in V.
$$
We remark for the values of $r$ and $h$ used in our simulations that
$\gamma=r/h\approx {\rev 0.061859}$ and $h=\tilde h$ implies taking 
$c_0=2\cdot3^6/(\pi\gamma^2) \approx {\rev 121284}$ is sufficient.  This notably large number demonstrates
one of the limitations in using small spatial averages to obtain
a type-1 interpolant observable and will play a role shortly when it
comes to filtering the outliers.

Although setting $J_h=P_H I_h$ would satisfy Definition~\ref{interpolants}
and result in a type-I interpolant observable,
we further include a spatial smoothing filter 
to remove the high-frequency spillover
that would otherwise result from the discontinuities in $I_h$.
The importance of such spatial smoothing was demonstrated computationally 
in~\cite{Gesho2016} and further employed in~\cite{Celik2019} for
the analysis of a different discrete-in-time data-assimilation algorithm.

In this paper we employ a smoothing filter 
given by the spectral projection
$$
    P_\lambda U=\sum_{|k|^2\le\lambda} U_k e^{ik\cdot x}
\wwords{where}
	U=\sum_{k\in\Z^2} U_k e^{ik\cdot x}
$$
and define $J_h=P_\lambda P_H I_h$.
One advantage of using $P_\lambda$ in our
interpolant observable
rather than a different spatial filter
is the simple way the improved 
Poincar\'e inequality
$$
	\lambda |(I-P_\lambda) \Phi|^2 \le\|(I-P_\lambda) \Phi\|^2
$$
along with the Pythagorean theorem
$$
	|\Phi-J_h\Phi|^2=|(I-P_\lambda)\Phi|^2+
		|P_\lambda (\Phi - P_H I_h\Phi)|^2
		\le (\lambda^{-1} + c_0h^2) \|\Phi\|^2,
$$
{\rev and a fixed constant $c_3$}
yields a smooth type-I
interpolant observable with {\rev $c_1=c_3+c_0$}
provided $\lambda^{-1}\le{\rev c_3}h^2$.
{\rev From a practical point of view it is reasonable for 
the resolution of the spectral filter
and the interpolant observable to be comparable.}

In our numerics we take $\lambda=80$.
After accounting for the conjugate symmetry $U_k=-\overline{U_{-k}}$
in the Fourier transform of a real vector field,
$$
	{\rm card}\big\{\,k\in\Z^2:0<|k|^2\le 80\,\}
	=248
$$
implies $P_\lambda$ consists of $124$ independent Fourier modes.  
This is similar in quantity to the~$81$ local averages 
which appear in the unfiltered interpolant $I_h$ of the $9\times 9$ spatial grid.

Having thus described $J_h$ we now detail the noise process 
that leads to the noisy interpolant $\widetilde J_h U(t_n)$.
Suppose at each time $t_n$ the measurements $m_j(t_n)\in \R^2$ are
contaminated by Gaussian errors such that
$$
	\widetilde m_j(t_n)=m_j(t_n)+\varepsilon_{2j-1} Y_{2j-1,n}
		+\varepsilon_{2j}Y_{2j,n}
\wwords{for} j=1,\ldots,N
$$
where 
$\varepsilon_j\in\R^2$ and
$Y_{j,n}$ form a family of independent standard normal
random variables.

Upon setting
$$
	\widetilde J_h U(t_n) = P_\lambda P_H \widetilde I_h U(t_n)
\wwords{where}
	\widetilde I_h U(t_n,x)=\sum_{j=1}^N \chi_j(x)\widetilde m_j(t_n)
$$
we obtain that
$$
	\eta_n=\widetilde J_h U(t_n)- J_h U(t_n)
		= P_\lambda P_H \sum_{j=1}^N \chi_j(\varepsilon_{2j-1} Y_{2j-1,n}
        +\varepsilon_{2j}Y_{2j,n})
		= \sum_{i=1}^{2N} \sigma_i Y_{i,n} \psi_i,
$$
where
$$
	\sigma_i= |P_\lambda P_H \chi_{\lfloor (i+1)/2\rfloor}\varepsilon_i|
\wwords{and}
\psi_i=
	P_\lambda P_H \chi_{\lfloor (i+1)/2\rfloor}\varepsilon_i/\sigma_i
$$
for $i=1,\ldots,2N$
and $\lfloor (i+1)/2\rfloor$ denotes
the greatest integer less than or equal to $(i+1)/2$.
Thus, $\eta_n$ has the form
hypothesized in \eqref{etadist}.

In our numerics we take 
\begin{plain}\begin{equation}
	\varepsilon_i={\varepsilon\over 2\pi\sqrt 2}\cases{
		e_1& for $i$ even\cr
		e_2& for $i$ odd
	\cr}
\end{equation}\end{plain}%
where $e_1=(1,0)$, $e_2=(0,1)$ 
and $\varepsilon>0$.
Note the $2\pi\sqrt 2$ normalization was chosen so the noise 
in unfiltered interpolant $\widetilde I_h$ satisfies
$\V[\widetilde I_h]=\varepsilon^2$.
Moreover,
$$
	\sigma^2= \sum_{i=1}^{2N} \sigma_i^2=
		\varepsilon^2 
		\sum_{j=1}^{N} \big(
		|P_\lambda P_H \chi_j e_1|^2+
		|P_\lambda P_H \chi_j e_2|^2\big)
	\approx (0.40058) \varepsilon^2
$$
shows $\sigma^2$ is proportional to $\varepsilon^2$
which we shall vary as $\varepsilon=10^{-\ell}$ for $\ell=4,\ldots,8$
as a test on the bounds in Theorem~\ref{mnudge}.

\begin{figure}[h!]
    \centerline{\begin{minipage}[b]{0.75\textwidth}
    \caption{\label{initialcond}%
	The left shows level-curves of $\curl f$ {\rev spaced $0.03$ apart
	with positive in red, negative in blue and zero in black}
	where $f$ is the 
    time-independent body forcing used in all the simulations.
    The right shows level-curves of the vorticity $\curl U_0$
	{\rev spaced $0.2$ apart}
    where $U_0$ is the initial condition of the 
	free-running solution.}
    \end{minipage}}
    \centerline{
		\includegraphics[height=0.35\textwidth]{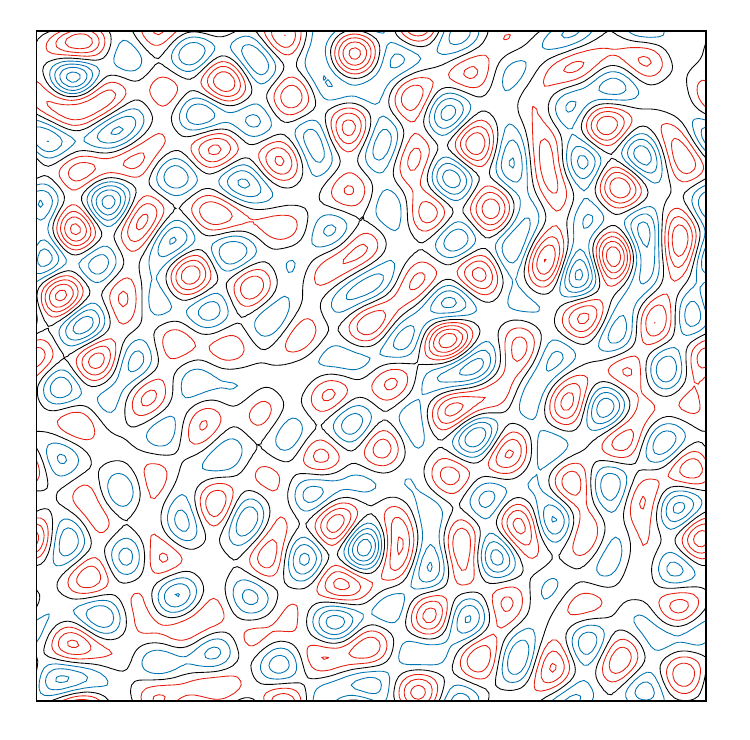}
		\includegraphics[height=0.35\textwidth]{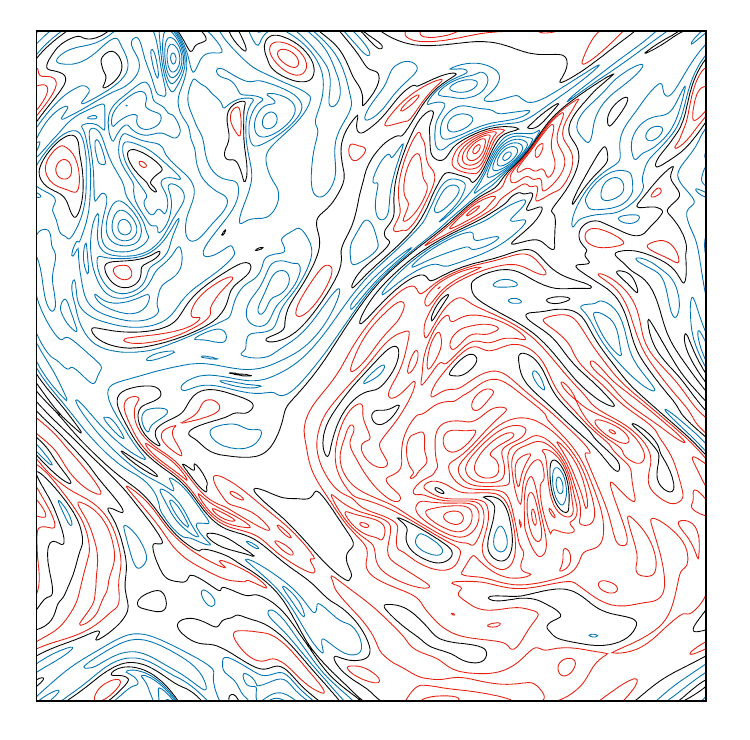}
	}
\end{figure}

The time-independent body forcing $f$ used for our simulations
satisfies
\begin{equation}\label{dforce}
	f=\sum_{\lambda_m\le |k|^2\le \lambda_M} f_k e^{ik\cdot x}
\wwords{with}
	k\cdot f_k=0,
\end{equation}
where $\lambda_m=100$ and $\lambda_M=142$. Here the Fourier coefficients
$f_k$ are time independent.  They were obtained by projecting randomly 
chosen values onto $H$ and then rescaling them to achieve the $L^2$ 
norm $|f|=0.025$.  Upon setting the viscosity $\nu=0.0001$ we further
have that the Grashof number of the free-running solution satisfies
$$
	{\bf Gr}(f)={1\over {\rev \lambda_1}\nu^2}|f|=2.5\times 10^{6}.
$$
{\rev Here $\lambda_1=1$ due to the $2\pi$-periodic
boundary conditions.
Recall also that $|f|$ refers to the $L^2$ norm, which in
terms of the Fourier series may be computed as
$$
	|f|=2\pi\bigg\{\sum_{\lambda_m\le |k|^2\le \lambda_M} 
		|f_{k,1}|^2+|f_{k,2}|^2
	\bigg\}^{1/2}
\wwords{where} f_k=(f_{k,1},f_{k,2}).
$$
}%
Figure \ref{initialcond} illustrates the forcing function used in 
our simulations on the left.
The exact values of the Fourier modes $f_k$
corresponding {\rev to} this force are given in \cite{Olson2008} and are known
to lead to a complicated time-dependent flow $U$.

The initial condition $U_0$ for the reference solution is 
theoretically assumed to lie on the global attractor.
Numerically, we take $U_0$ as the final state of a long-time 
integration of~\eqref{ns2d} starting
from {\rev zero at time $t=-20480$} in the past.
{\rev Note the forcing function given by \eqref{dforce} is sufficient
such that by time $t=0$ all modes such that $k\ne 0$ which satisfy
the $2/3$ antialiasing condition are present in the solution.
Furthermore, the energetics of the
flow appear to represent the long-time statistical 
behavior that results from the forcing.}
The graph on the right in Figure \ref{initialcond} depicts 
the initial condition for the
reference trajectory used for our data-assimilation experiments.

For comparison with our theory, we note 
the estimates in \cite{Olson2003} applied to 
the forcing given by \eqref{dforce} lead to the
{\it a priori\/} bounds 
$$
	\rho_H
\le {1\over \lambda_m^{1/2}} \rho_V = 25
\wwords{and }
	\rho_V\le {1\over \lambda_1^{1/2} \nu} |f| = 250.
$$
Therefore, a suitable theoretical bound to filter the outliers satisfies
\begin{plain}$$
\eqalign{
	M&=\rho_H+c_1^{1/2}h\rho_V\approx 25+ c_1^{1/2} (121.49)
		\approx {\rev 42333}.
}$$
\end{plain}%
Setting $M={\rev 42333}$ results in essentially
no outliers being filtered by our modifications to the original
time-delay nudging method.  
The size of $M$ arises directly from our previous
estimate on $c_1$ along with values of $\rho_H$ and $\rho_V$
that, as already mentioned, appear much larger than needed.
The possibility of empirically tuning this cutoff to achieve better 
numerical results is interesting but outside the scope of the
present paper.

To compute the expectation $\E\big[\|U-u\|^2\big]$ 
whose bounds are the subject of Theorem~\ref{mnudge} 
we consider the time evolution of an ensemble of 500 approximating
solutions obtained from observational measurements of
the same reference solution $U$ each contaminated by
different realizations of the Gaussian noise $\eta_n$ 
parameterized by $\omega_j\in\Omega$ where $\Omega$ is
a suitable probability space.
Thus, $u(t;w_j)$ is a random point in $V$ and we approximate
$$
	\E\big[\|U(t)-u(t)\|^2\big]\approx
	{1\over 500} \sum_{j=1}^{500} \|U(t)-u(t;\omega_j)\|^2.
$$

\begin{figure}[h!]
    \centerline{\begin{minipage}[b]{0.75\textwidth}
    \caption{\label{eassim}%
	Evolution of $\E\big[\|U-u\|^2\big]$ for a fixed
    solution $U$ with approximating solutions $u$
	obtained by observational measurements contaminated by
	Gaussian noise where $\varepsilon\in \{ 10^{-4},\ldots, 10^{-8}\}$.
	The shaded regions illustrate the bounds $I_p$ on 
	$p=88$, $70$ and $40$ percent
	of the 500 paths in each ensemble.
	The $\varepsilon=10^{-5}$ and $10^{-7}$ trajectories have been omitted
	for $t\in[2000,2500]$ to avoid overlap.}
    \end{minipage}}
    \centerline{
		\includegraphics[height=0.45\textwidth]{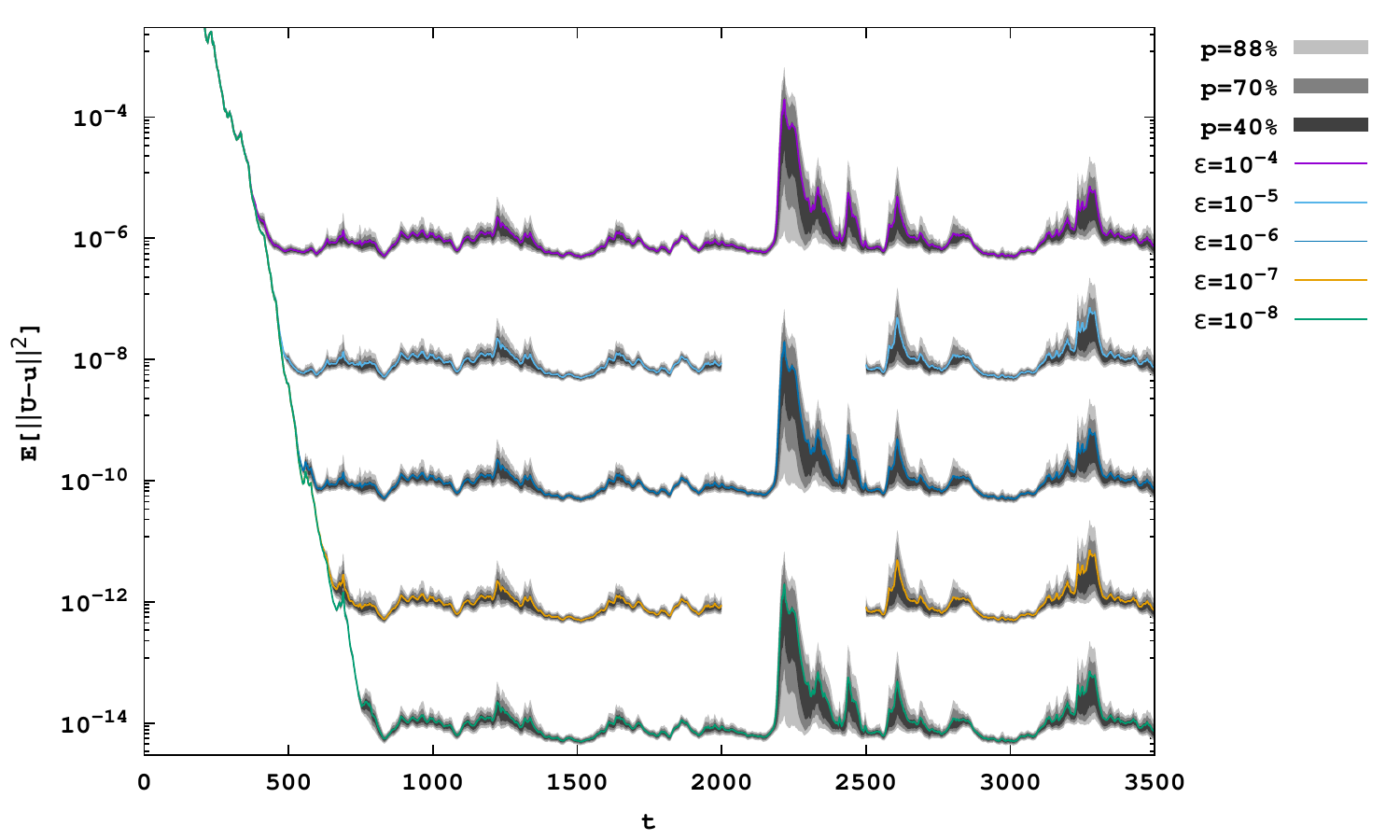}
	}
\end{figure}

The statistics of the pathwise trajectories may
further be characterized by
the intervals $I_p=[a,b]$ for $p\in[0,1]$ where
${\bf P}\big\{ \|U-u\|^2\in I_p \big\} = p$ by
choosing $a$ and $b$ such that
$$
     {\bf P}\big\{ \|U-u\|^2\ge a \big\} = (1+p)/2
\wwords{and}
     {\bf P}\big\{ \|U-u\|^2\le b \big\} = (1+p)/2.
$$
At any point in time approximations for $a$ and $b$ 
are given by the thresholds below and above which 
the desired percentage of trajectories in the ensemble lie.

Parts of the time evolution of 
$\E\big[\|U-u\|^2\big]$ 
for selected values of $\sigma^2=(0.40058)\varepsilon^2$ 
is depicted in Figure~\ref{eassim} 
along with a shaded region that indicates
the intervals $I_p$ corresponding to $p=88$, 60 and 30.
Upon characterizing the average large-eddy turnover time in the
reference solution by
$$
	\tau=
	{4\pi^2\over T}\int_0^T 
		\|U\|_{H^{-1/2}(\T)}^2\Big/ 
	\Big({1\over T}\int_0^T
		\|U\|_{L^{2}(\T)}^2\Big)^{3/2}
	\approx {\rev 30.585},
$$
we note the full trajectory of the ensemble was computed
using {\rev $655360$} time steps 
until time $T={\rev 10240}$ or equivalently for $T/\tau\approx {\rev 334.8}$ 
large-eddy turnovers.

We remark by $t=1500$
sufficient time has passed 
over which the initial condition
$u_0=0$ is forgotten and after which the expected 
value of $\E\big[\|U-u\|^2\big]$ starts fluctuating about its mean.
The maximum bound for $t\in[1500,T]$ 
is determined by the interval $t\in [2000,2500]$ 
illustrated in Figure~\ref{eassim}.
Since the same reference solution $U$ was used for each run,
it is only a little surprising that this maximum occurs 
in the same place for the different choices of $\varepsilon$.
More notable and
further illustrated in Figure~\ref{easlope}
is the fact that this maximum
is about {\rev 100} times larger than the time-averaged value
taken over the same interval.

Computations not reported here suggest even greater excursions from 
the average can happen when the measurement points $x_i$ are not 
given by a uniform grid.  
At the same time, simulations performed by 
Law, Sanz--Alonso, Shukla and Stuart \cite{Law2016} for 
the Lorenz 96 model suggest that a careful placement of the
points $x_i$ could yield more accurate synchronization
between $U$ and $u$ than a uniform grid.
Improved results for observations which scan the domain
over time were obtained by Larios and Victor
\cite{Larios2021} 
for the Chafee--Infante equations,
{\rev by Franz, Larios and Victor \cite{Franz2022} for the
two-dimensional Navier--Stokes equations}
and by Biswas, Bradshaw and Jolly \cite{Biswas2021} 
{\rev see also \cite{Biswas2023}}
for the two-dimensional Navier--Stokes equations.
This leads us to an interesting question for future investigation:
If the measurement points themselves
are advected by the flow over time like buoys in the ocean, could 
the resulting distribution of the $x_i$ be better than a uniform grid?

\begin{figure}[h!]
    \centerline{\begin{minipage}[b]{0.75\textwidth}
    \caption{\label{easlope}%
	Dependency of bounds on $\E\big[\|U-u\|^2\big]$
	for $t\in [1500,{\rev 10240}]$ as a function of $\sigma^2$.
	The values denoted by {\it max\/} are numerical versions
    of the analytic bounds provided by Theorem \ref{mnudge} and
    depend on $\sigma^2$ in a similar way. 
	Values for {\it avg\/} and {\it min\/} are shown for comparison.
	}
    \end{minipage}}
    \centerline{
		\includegraphics[height=0.45\textwidth]{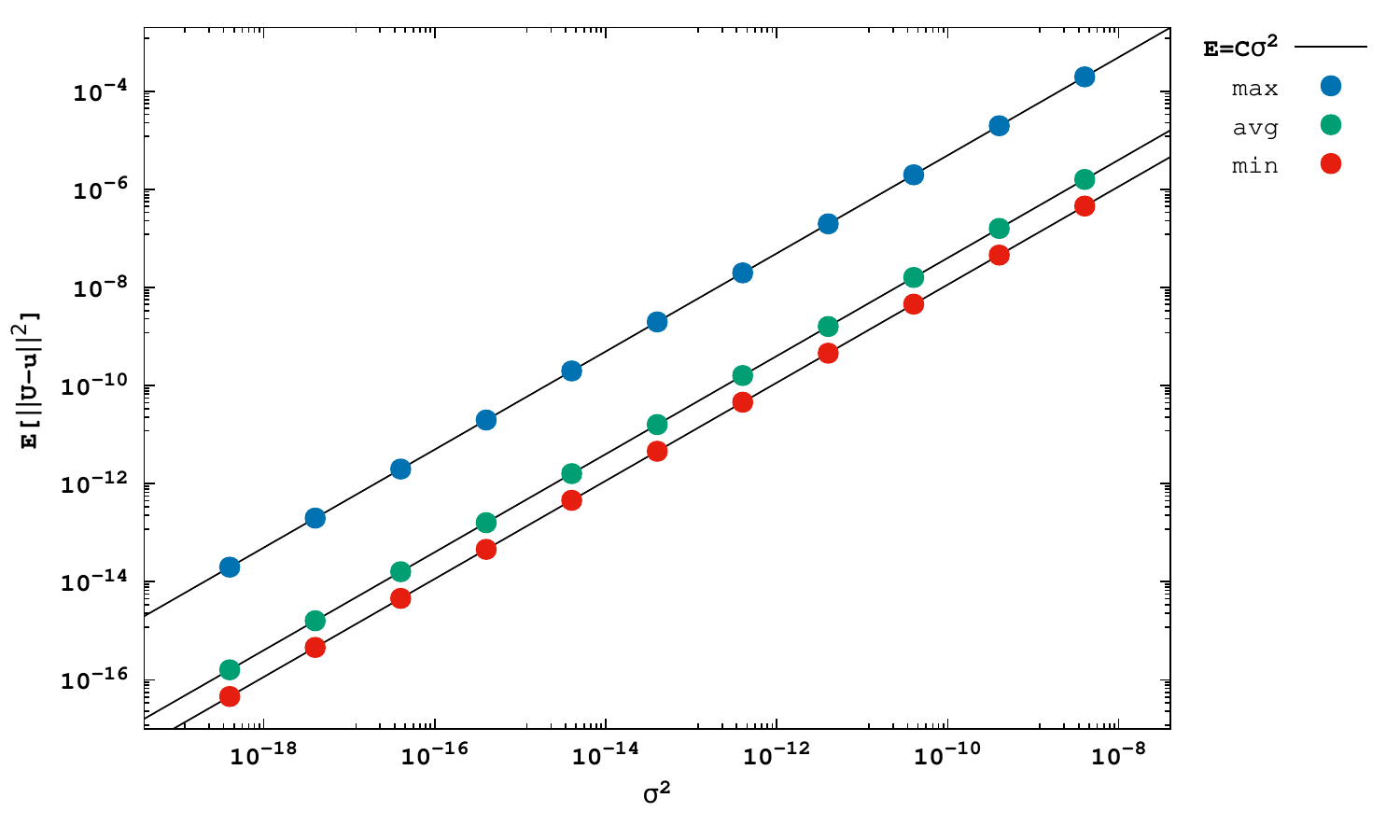}
	}
\end{figure}

In either case, the {\rev 100}-fold difference between the maximum of 
$\E\big[\|U-u\|^2\big]$ and the time-averaged value observed in the
present simulations may be due to the fact that
our numerical choices for $h$, $\delta$ and $\mu$ are
significantly more optimistic than guaranteed by the theory.
In particular, if $h$ and $\delta$ were smaller {\rev with $\mu$ 
and perhaps $r$} larger, it
is reasonable to suppose such large fluctuations
would not happen over time.

We conclude by noting the graph displayed as Figure~\ref{easlope} shows
that the maximum, minimum and average values of 
$\E\big[\|U-u\|^2\big]$ are all proportional to $\sigma^2$ for over 10 decimal 
orders of magnitude.  This suggests,
except for the constants and 
the logarithmic correction,
that the qualitative
behavior of the analytic bound in Theorem~\ref{mnudge}
is physically reasonable.

\appendix

\section{Local Spatial Averages}
\label{localaverages}

In this appendix we show the interpolant observable $I_h$ 
defined by \eqref{noisefree} is a type-I interpolant observable.
Mathematically this interpolant may be seen as a 
physically reasonable mix of the determining nodes and 
volume elements studied by Jones and Titi in~\cite{Jones1993}.
Figure~\ref{interpolant} illustrates the variety of interpolants treated by 
this construction.  Note that the discs which represent the
spatial averages in our observational measurements overlap 
in the tessellation on the right.  Note also that the maximum 
distance $h$ between 
any point in the domain and the nearest point of observation
is much less for the regular grid.

Having examined two examples of the interpolants under
consideration, we now prove
\begin{theorem}\label{thmintobs}
Suppose $r$ is proportional to $h$ such that
$r=\gamma h$ for some $\gamma\in(0,1)$.  Then the piecewise
constant interpolant $I_h\colon V\to L^2(\T)$ 
defined by \eqref{noisefree} satisfies
$$
	\|\Phi-I_h(\Phi)\|_{L^2(\T)}^2
		\le c_0 h^2 \|\Phi\|_{H^1(\T)}^2
\wwords{for all} \Phi\in V,
$$
where
\begin{equation}\label{c1const}
	c_0={2^53^6\over \pi\gamma^2} \wwords{and}
	h=\sup_{x\in\T} \min\{\, d(x,x_i): i=1,\ldots,N\,\}.
\end{equation}
\end{theorem}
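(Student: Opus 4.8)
The plan is to exploit the Voronoi structure to localize, to compare both the pointwise value $\Phi(x)$ and the local average $m_j$ against a common intermediate average taken over a disc \emph{centred at the running point $x$}, and to dispose of the overlap of these enlarged discs by Tonelli's theorem. Write $B_\rho(x)=\{y\in\T:d(y,x)<\rho\}$. Since the cells $V_j=\{\chi_j=1\}$ tessellate $\T$ up to a null set, and $m_j=\frac1{\pi r^2}\int_{B_r(x_j)}\Phi$, one has
\[
	\|\Phi-I_h\Phi\|_{L^2(\T)}^2=\sum_{j=1}^N\int_{V_j}|\Phi(x)-m_j|^2\,dx .
\]
The key geometric fact is that for $x\in V_j$ one has $d(x,x_j)=\min_i d(x,x_i)\le h$, so $V_j\subseteq B_h(x_j)$ and, because $r=\gamma h<h$, the averaging disc satisfies $B_r(x_j)\subseteq B_{2h}(x)$ for \emph{every} $x\in V_j$ (for the relevant range of $h$ the sets $B_{2h}(x)$ are genuine Euclidean discs, and otherwise one lifts $\Phi$ periodically to $\R^2$). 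Setting $\overline\Phi_x=\frac1{\pi(2h)^2}\int_{B_{2h}(x)}\Phi$, I split $\Phi(x)-m_j=(\Phi(x)-\overline\Phi_x)+(\overline\Phi_x-m_j)$ and estimate the two pieces separately after $|a+b|^2\le 2|a|^2+2|b|^2$.

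Both pieces reduce to the single elementary inequality
\[
	\int_{\T}|\Phi(x)-\Phi(x+z)|^2\,dx\le |z|^2\,\|\nabla\Phi\|_{L^2(\T)}^2,\qquad |z|\le 2h,
\]
obtained from the fundamental theorem of calculus along the segment joining $x$ to $x+z$ together with Cauchy--Schwarz, by density from smooth periodic fields. For the first piece, summing $\int_{V_j}$ over $j$ collapses the cells back to all of $\T$, and Jensen's inequality gives $|\Phi(x)-\overline\Phi_x|^2\le\frac1{\pi(2h)^2}\int_{B_{2h}(x)}|\Phi(x)-\Phi(y)|^2\,dy$; the substitution $y=x+z$ and an interchange of the order of integration (valid for the nonnegative integrand) bound this by $\frac1{\pi(2h)^2}\int_{|z|<2h}|z|^2\,dz\cdot\|\nabla\Phi\|_{L^2}^2=O(h^2\|\nabla\Phi\|_{L^2}^2)$. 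For the second piece, Jensen together with $B_r(x_j)\subseteq B_{2h}(x)$ yields
\[
	|\overline\Phi_x-m_j|^2=\Bigl|\tfrac1{\pi r^2}\textstyle\int_{B_r(x_j)}(\Phi(y)-\overline\Phi_x)\,dy\Bigr|^2\le\frac1{\pi r^2}\int_{B_{2h}(x)}|\Phi(y)-\overline\Phi_x|^2\,dy ;
\]
summing $\int_{V_j}$ over $j$ again recovers $\int_\T$, the inner integral is a Poincar\'e--Wirtinger quantity on the disc $B_{2h}(x)$ bounded by $O(h^2)\int_{B_{2h}(x)}|\nabla\Phi|^2$ (itself a consequence of the displayed translate estimate via the double-average trick), and a final application of Tonelli turns $\int_\T\int_{B_{2h}(x)}|\nabla\Phi(y)|^2\,dy\,dx$ into $|B_{2h}(0)|\,\|\nabla\Phi\|_{L^2}^2$, producing a factor $|B_{2h}(0)|/|B_r(0)|=4/\gamma^2$. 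Collecting the two pieces gives $\|\Phi-I_h\Phi\|_{L^2}^2\le (\text{absolute constant})\cdot\gamma^{-2}h^2\|\nabla\Phi\|_{L^2}^2$; bookkeeping the explicit, deliberately non-sharp constants in the Jensen/translate/volume steps yields the stated $c_0=2^53^6/(\pi\gamma^2)$, and the Poincar\'e inequality $\|\nabla\Phi\|_{L^2}\le\|\Phi\|_{H^1}$ finishes the proof.

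The step I expect to be the main obstacle is controlling the \emph{overlap} of the enlarged discs: naively summing a Poincar\'e estimate over the discs $B_h(x_j)$ centred at the sample points would require a bound on the covering multiplicity $\#\{j:d(y,x_j)<h\}$, which is not available without a separation hypothesis on the $x_i$. Centring the intermediate average at the variable point $x$ rather than at $x_j$ is exactly what circumvents this, since after summing over $j$ the cells tile $\T$ and the two remaining double integrals are dispatched by Tonelli at the cost of the single harmless factor $4/\gamma^2$. The only other point needing care is the torus geometry, namely that $2h$ is small enough for $B_{2h}(x)$ to be a true Euclidean disc so that Jensen and the translate estimate apply verbatim, which holds for the interpolants under consideration; in the general case one argues on $\R^2$ using the periodicity of $\Phi$.
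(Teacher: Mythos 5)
Your proof is correct and achieves the same $h^2/\gamma^2$ scaling, but by a genuinely different route from the paper. The paper first reduces to $h=2\pi/\kappa$, tiles $\T$ by squares $R_{pq}$ of side $h$, observes that each Voronoi cell together with its averaging disc lies in the enlarged $3h\times 3h$ square $Q_{pq}$ containing its node, and controls overlap by the fixed covering multiplicity $9$ of the $Q_{pq}$; the difference $\Phi(\xi,\eta)-\Phi(s,t)$ is then estimated coordinatewise by the fundamental theorem of calculus, which is what makes the constant $2^5 3^6/(\pi\gamma^2)$ fully explicit. You instead interpose the average of $\Phi$ over the disc $B_{2h}(x)$ centred at the running point $x$ and dispose of overlap by translation invariance and Tonelli, which avoids both the dyadic reduction and any covering-multiplicity count; as you note, this is precisely what makes irregularly placed nodes harmless, and it is arguably the cleaner argument, at the price of quoting a Poincar\'e--Wirtinger inequality rather than computing everything by hand. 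Two caveats. First, the inequality you need for the second piece must be the \emph{local} one, $\int_{B_{2h}(x)}|\Phi-\overline\Phi_x|^2\le C h^2\int_{B_{2h}(x)}|\nabla\Phi|^2$, with the gradient integrated only over the disc so that Tonelli produces just the volume factor $(2h/r)^2=4/\gamma^2$; this is standard for a convex disc via the segment (double-average) argument, but it does not follow from the displayed global translate estimate as your parenthetical suggests --- using the global estimate at that point would lose the factor $h^2$ altogether. Since your displayed computation does use the local form, this is a misattribution rather than a gap. Second, the stated $c_0$ is asserted rather than derived; that is harmless, because any absolute constant times $\gamma^{-2}$ not exceeding $2^5 3^6/\pi$ proves the theorem and your ingredients (the $|z|^2$-weighted translate bound, Jensen, the disc Poincar\'e constant, and the volume ratio) stay comfortably below it, but the bookkeeping should be written out if the specific value of $c_0$ is to be certified, and the torus wrap-around for large $h$ should be handled, as in the paper, by the periodic extension you mention.
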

\begin{proof}
Without loss of generality, we shall assume $h=2\pi/\kappa$ for
some $\kappa\in\N$.  It this were not the case,
there would be $\kappa\in\N$ such that
$$
{2\pi\over \kappa+1}< h \le {2\pi\over \kappa}.$$
Then, upon replacing $h$ by $\tilde h=2\pi/\kappa$ in the
following proof and
noting $\tilde h\le (\kappa+1)h/\kappa\le 2h$, 
the desired result for any $h$ could be obtained by
a simple modification of the constant $c_0$.
We note before proceeding, that the value of $c_0$ stated in 
\eqref{c1const} has, in fact, already been modified by a factor 
of 16 to take the general case into account.

Now, divide the torus $\T$ into $M=\kappa^2$ equal squares
with sides of length $h$ denoted
$$
	R_{pq}=[(p-1)h, ph)\times[(q-1)h,qh)
\wwords{for} p,q=1,\ldots,\kappa.
$$
Given $\Phi\colon\T\to\R^2$ extend $\Phi$ by periodicity to all of $\R^2$.
For convenience, we shall continue to call the extended function $\Phi$
and assume that $\Phi(x+2\pi m)=\Phi(x)$
for every $x\in\R^2$ and $ m\in\Z^2$.
Having done this, we define
$$
{\cal U}_j=\big\{\, x\in\R^2 : |x-x_j| \le
		\min\{\, d(x,x_i) : i=1,\ldots, N\,\}\big\}$$
and note that 
$$
	I_h(\Phi)(\xi,\eta)={1\over \pi r^2} 
		\int_{B_r(x_j)} \Phi(s,t) ds\,dt
\wwords{for every}
	(\xi,\eta)\in {\cal U}_j.
$$
Thus, after periodic extension, any integral over 
$\T$ is equal to the same integral over
$\bigcup_{j=1}^{m} \, {\cal U}_j$.

Next, define ${\cal J}_{pq}=\{\,j:x_j\in R_{pq}\,\}$.
Since the $R_{pq}$ are disjoint, then ${\cal J}_{pq}$'s form
a partition of $\{1,\ldots,n\}$.
Expand the $R_{pq}$ by $h$ in each dimension to obtain
$$
	Q_{pq}=[(p-2)h, (p+1)h)
		\times[(q-2)h,(q+1)h)
\words{for}
		p,q=1,\ldots,\kappa.
$$
It follows that 
${\cal U}_j\times B_r(x_j)\subseteq Q_{pq}^2$
for each $j\in {\cal J}_{pq}$.
Consequently,
\begin{align*}
	\|\Phi-I_h \Phi\|_{L^2(\T)}^2
	&=\int_\T
		\big|\Phi(\xi,\eta)-I_h(\Phi)(\xi,\eta)\big|^2d\xi,d\eta\\
	&=\sum_{j=1}^n\int_{{\cal U}_j} \Big|\Phi(\xi,\eta)-
		{1\over \pi r^2} \int_{B_r(x_j)}
        \Phi(s,t) ds\,dy
	\Big|^2d\xi\,d\eta\\
	&={1\over \pi^2r^4}
		\sum_{j=1}^n\int_{{\cal U}_j} \Big|
		\int_{B_r(x_j)}
		\big(\Phi(\xi,\eta)-
        \Phi(s,t)\big) ds\,dt
	\Big|^2d\xi\,d\eta\\
	&\le {1\over \pi r^2}
		\sum_{j=1}^n
		\int_{{\cal U}_j\times B_r(x_j)}
		\big|\Phi(\xi,\eta)-\Phi(s,t)\big|^2 ds\,dt\,d\xi\,d\eta\\
	&= {1\over \pi r^2}
		\sum_{p,q=1}^\kappa
		\sum_{j\in{\cal J}_{pq}}
		\int_{{\cal U}_j\times B_r(x_j)}
		\big|\Phi(\xi,\eta)-\Phi(s,t)\big|^2 ds\,dt\,d\xi\,d\eta\\
	&\le {1\over \pi r^2}
		\sum_{p,q=1}^\kappa
		\int_{Q_{pq}^2} 
		\big|\Phi(\xi,\eta)-\Phi(s,t)\big|^2 ds\,dt\,d\xi\,d\eta.
\end{align*}
We remark that the last inequality in the estimate above holds 
because the sets ${\cal U}_j$ are disjoint except for a set of measure 
zero, which implies the same for ${\cal U}_j\times B_r(x_j)$.

Since $(\xi,\eta)\in Q_{pq}$ and $(s,t)\in Q_{pq}$, we continue
to estimate as
\begin{align*}
	\big|\Phi(\xi,\eta)-\Phi(s,t)\big|
		&\le \big|\Phi(\xi,\eta)-\Phi(s,\eta)\big|
			+\big|\Phi(s,\eta)-\Phi(s,t)\big|\\
		&=\Big|\int_s^\xi \Phi_x(\alpha,\eta)d\alpha\Big|
		+\Big|\int_t^\eta \Phi_y(s,\beta)d\beta\Big|\\
		&\le 
		\int_{(p-2)h}^{(p+1)h}\big|\Phi_x(\alpha,\eta)\big|d\alpha
		+\int_{(q-2)h}^{(q+1)h}\big|\Phi_y(s,\beta)\big|d\beta.
\end{align*}
This implies
\begin{align*}
	\big|\Phi(\xi,\eta)-\Phi(s,t)\big|^2
	&\le
	2\Big(\int_{(p-2)h}^{(p+1)h}\big|\Phi_x(\alpha,\eta)\big|d\alpha\Big)^2
	+2\Big(\int_{(q-2)h}^{(q+1)h}\big|\Phi_y(s,\beta)\big|d\beta\Big)^2\\
	&\le
	6h\int_{(p-2)h}^{(p+1)h}\big|\Phi_x(\alpha,\eta)\big|^2d\alpha
	+6h\int_{(q-2)h}^{(q+1)h}\big|\Phi_y(s,\beta)\big|^2d\beta.
\end{align*}
It follows that
\begin{align*}
	\int_{Q_{pq}^2}
	&\big|\Phi(\xi,\eta)-\Phi(s,t)\big|^2 d\xi\,d\eta\,ds\,dt\\
	&\le
		\int_{(p-2)h}^{(p+1)h}\!\!\!
		\int_{(q-2)h}^{(q+1)h}\!\!\!
		\int_{(p-2)h}^{(p+1)h}\!\!\!
		\int_{(q-2)h}^{(q+1)h}\!\!\!
		\big|\Phi(\xi,\eta)-\Phi(s,t)\big|^2 d\xi\,d\eta\,ds\,dt\\
	&\le
		6h\Big(
		\int_{(p-2)h}^{(p+1)h}\!\!\!
		\int_{(q-2)h}^{(q+1)h}\!\!\!
		\int_{(p-2)h}^{(p+1)h}\!\!\!
		\int_{(q-2)h}^{(q+1)h}\!\!\!
		\int_{(p-2)h}^{(p+1)h}\big|\Phi_x(\alpha,\eta)\big|^2
			d\xi\,d\eta\,ds\,dt\,d\alpha\\
	&\phantom{6h\Big(}+
		\int_{(p-2)h}^{(p+1)h}\!\!\!
		\int_{(q-2)h}^{(q+1)h}\!\!\!
		\int_{(p-2)h}^{(p+1)h}\!\!\!
		\int_{(q-2)h}^{(q+1)h}\!\!\!
		\int_{(q-2)h}^{(q+1)h}\big|\Phi_y(s,\beta)\big|^2
			d\xi\,d\eta\,ds\,dt\,d\beta\Big)\\
	&=6h\Big(27h^3
		\int_{(q-2)h}^{(q+1)h}\!\!\!
		\int_{(p-2)h}^{(p+1)h}\big|\Phi_x(\alpha,\eta)\big|^2
			d\eta\,d\alpha\\
	&\phantom{6h\Big(}+27h^3
		\int_{(p-2)h}^{(p+1)h}\!\!\!
		\int_{(q-2)h}^{(q+1)h}\big|\Phi_y(s,\beta)\big|^2
			ds\,d\beta\Big)\\
	&\le 162 h^4\big(
		\|\Phi_x\|_{L^2(Q_{pq})}^2
		+\|\Phi_y\|_{L^2(Q_{pq})}^2
		\big).
\end{align*}
Therefore,
\begin{align*}
    \|\Phi-I_h\Phi\|_{L^2(\T)}^2
		&\le {164 h^4\over\pi r^2}
		\sum_{p,q=1}^\kappa \big(
		\|\Phi_x\|_{L^2(Q_{pq})}^2
		+\|\Phi_y\|_{L^2(Q_{pq})}^2
		\big)\\
		&= {164 h^4\over\pi r^2}
		\sum_{p,q=1}^\kappa \big(
		9\|\Phi_x\|_{L^2(R_{pq})}^2
		+9\|\Phi_y\|_{L^2(R_{pq})}^2
		\big)
		={1458 h^4\over\pi r^2} \|\Phi\|_V^2.
\end{align*}
To identify the constant $c_0$ and finish the proof,
substitute $r=h/\gamma$ and, as mentioned in the first
paragraph, replace $h$ by $2h$ to account for general $h$.
\end{proof}

\par\medskip
\noindent
{\bf Data Availability.}  
{\rev Although different computer architectures, software 
libraries and levels of optimization may lead to
variations in rounding errors and a reference trajectory
that follows a different path, 
the expected error of the approximating solution depicted 
in Figure~\ref{easlope} 
and related statistics
are reproducible given the 
information in Section~\ref{compute}.
}
\par\medskip
\noindent
{\rev {\bf Acknowledgements.}  We thank the 
anonymous referees for their careful reading and comments.
These greatly improved the present manuscript.}

\bibliographystyle{abbrv}
\end{document}